\newcommand{\R}{\mathbb{R}} 
\newcommand{\scal}{\operatorname{scal}} 
\renewcommand{\sec}{\operatorname{sec}} 
\newcommand{\Ad}{\operatorname{Ad}} 
\newcommand{\Hom}{\operatorname{Hom}} 
\newcommand{\Alt}{\operatorname{Alt}} 
\newcommand{\Bi}{\operatorname{Bi}} 
\newcommand{\nablabarpm}{\nablabar\hspace{0pt}^\pm} 
\newcommand{\nablabarp}{\nablabar\hspace{0pt}^+} 
\newcommand{\nablabarm}{\nablabar\hspace{0pt}^-} 
\newcommand{\norm}[1]{\Vert #1 \Vert} 
\newcommand{\prodesc}[2]{\langle #1,#2 \rangle} 
\newcommand{\End}{\operatorname{End}}
\newcommand{\gfrak}{\mathfrak{g}}
\newcommand{\pfrak}{\mathfrak{p}}
\newcommand{\kfrak}{\mathfrak{k}}
\newcommand{\Xfrak}{\mathfrak{X}}
\newcommand{\so}{\mathfrak{so}}
\newcommand{\Lcal}{\mathcal{L}}
\newcommand{\Rcal}{\mathcal{R}}
\newcommand{\Gcal}{\mathcal{G}}
\newcommand{\Sbb}{\mathbb{S}} 
\newcommand{\Tbb}{\mathbb{T}}
\newcommand{\Cinf}{C^\infty}
\newcommand{\nablabar}{\overline{\nabla}}
\newtheorem{theorem}{Theorem}
\newtheorem*{theorem*}{Theorem}
\newtheorem{corollary}[theorem]{Corollary}
\newtheorem{lemma}[theorem]{Lemma}
\newtheorem{proposition}[theorem]{Proposition}
\newtheorem*{question*}{Question}
\newenvironment{customthm}[1]
{\innercustomthm}
{\endinnercustomthm}
\theoremstyle{definition}
\newtheorem{definition}[theorem]{Definition}
\theoremstyle{remark}
\newtheorem*{remark*}{Remark}
\definecolor{color-citas}{RGB}{9,21,128}
\begin{document} 

\renewcommand{\bibname}{References} 

\title[Flat Generalized Connections on Courant Algebroids]{Flat Generalized Connections on \\ Courant Algebroids}
\author{Gil R. Cavalcanti}
\author{Jaime Pedregal}
\address{Department of Mathematics, Utrecht University, 3508 TA Utrecht, The Netherlands}
\email{g.r.cavalcanti@uu.nl, j.pedregalpastor@uu.nl}
\author{Roberto Rubio}
\address{Universitat Aut\`onoma de Barcelona, 08193 Barcelona, Spain}
\email{roberto.rubio@uab.es}

\thanks{This project has been supported by FEDER/AEI/MICINN through the grant PID2022-137667NA-I00 (GENTLE). G.C. and J.P. are also supported by the Open Competition grant number OCENW.M.22.264 from NWO, and R.R. by AGAUR under 2021-SGR-01015 and by FEDER/AEI/MICINN under RYC2020-030114-I}

\begin{abstract}
We consider a family of metric generalized connections on transitive Courant algebroids, which includes the canonical Levi-Civita connection, and study the flatness condition. We find that the building blocks for such flat transitive Courant algebroids are compact simple Lie groups. Further, we give a description of left-invariant flat Levi-Civita generalized connections on such Lie groups, which, in particular, shows the existence of non-flat ones.
\end{abstract}

\maketitle
\tableofcontents

\bigskip
\section{Introduction}

In differential geometry, flat spaces can be seen as the ``simplest'' objects. For example, a flat Riemannian manifold is locally Euclidean not only smoothly, but geometrically. Indeed, a flat Riemannian manifold admits local orthonormal parallel frames, and the torsion-freeness of the Levi-Civita connection implies that these frames come from local coordinates on the manifold. Hence, flat Riemannian manifolds have no local geometric invariants. This reasoning applies to all integrable geometries (those admitting a compatible torsion-free connection, or, more precisely, 1-integrable $G$-structures). For these, flatness means the absence of local geometric invariants.

Non-integrable geometries, however, have also attracted the attention of both mathematicians and physicists. Examples include almost Hermitian, contact metric, naturally reductive spaces... Many of these geometries carry a canonical metric connection with skew-symmetric torsion (see, e.g., \cite{agricola-srni-lectures}), so their study falls under the umbrella of Bismut geometry, by which we mean Riemannian geometry endowed with a 3-form. In this context, the relevant connections carrying the geometrical information of a Bismut manifold $(M, g, H)$ are the two Bismut connections, which are metric and have torsion $\pm H$. Flatness of one of these connections implies the existence of local orthonormal parallel frames, but the presence of torsion now excludes the possibility of these frames being coordinate frames. Instead, these frames and their Lie brackets lead to two possibilities: either the triple is locally isometric to a compact simple Lie group with a bi-invariant metric and the Cartan form, or it isometric to $\Sbb^7$. This result was already known to Cartan and Schouten \cite{cartan-schouten} (see \cite{agricola-friedrich-bismut-flat} for a short classification-free proof of this result).

Bismut connections received renewed interest after Hitchin’s insight \cite{hitchin,gualtieri-branes} that the Bismut connections of a triple $(M,g,H)$, for $g$ a Riemannian metric and $H$ a closed 3-form, were intimately related to the structure of a generalized metric on an exact Courant algebroid. This motivated a whole line of research on Bismut geometry from the generalized geometric viewpoint. Particular examples of this are, to name a couple, the description of bi-Hermitian geometry as generalized Kähler \cite{gualtieri-phd,gualtieri-gK-geom} with all the further research that this motivated \cite{lin-tolman,cavalcanti,hitchin-gK,goto-deformations,bischoff-gualtieri-zabzine,gualtieri-gK-Ham-def,boulanger,goto-scal,apostolov-streets,doppenschmitt-phd} (among many others); and the recasting of the heterotic supergravity equations as a Ricci-flatness condition for the generalized Ricci curvature \cite{coimbra-strickland-constable-waldram,garcia-fernandez-heterotic,coimbra-minasian-triendl-waldram,jurco-moucka-vysoky}. More generally, it has also led to new insights into the Strominger and related systems \cite{garcia-fernandez-rubio-tipler,garcia-fernandez-spinors,garcia-fernandez-gonzalez-molina,garcia-fernandez-gonzalez-molina-streets}, and has given a natural framework for generalized Ricci flow, first introduced in \cite{oliynyk-suneeta-woolgar} (according to \cite{streets}), which has been later rephrased and further developed in generalized-geometric language \cite{severa-valach,garcia-fernandez-spinors,garcia-fernandez-streets,streets-strickland-constable-valach}.

The aim of this paper is to answer the question of flatness in generalized geometry, where the situation seems, in general terms, quite different from the Riemannian and the Bismut situations. First of all, one of the striking features of generalized Riemannian geometry is the non-uniqueness of Levi-Civita connections. Here, we introduce a three-dimensional family of generalized connections, which we call the Bismut family, interpolating between the two canonical generalized connections previously considered in the literature \cite{gualtieri-branes,garcia-fernandez-rubio-tipler}. In this family, there is only one Levi-Civita connection, precisely that of  \cite{garcia-fernandez-rubio-tipler}. Secondly, being generalized-flat does not necessarily mean the existence of local parallel frames, a behavior already observed for Lie algebroids \cite{fernandes-poisson,fernandes-lie-algebroids}. 
We believe this should be linked to a notion of ``generalized holonomy'', but we leave this for future study.

Our main result (phrased for the exact Courant algebroid $\Tbb M$) is the following:

\begin{theorem*}
Let $(M,g,H)$ be a triple with $H\neq 0$, $dH=0$ and $(M,g)$ a 1-connected, complete and irreducible Riemannian manifold, and such that $\Tbb M$ admits a flat generalized Bismut connection. Then $(M,g,H)$ is a compact simple Lie group with a bi-invariant metric and a multiple of the Cartan 3-form.
\end{theorem*}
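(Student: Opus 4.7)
The plan is to reduce the generalized flatness condition on $D$ to the classical flatness of the two Bismut connections $\nabla^\pm = \nabla^g \pm \frac{1}{2} g^{-1} H$, and then invoke the Cartan-Schouten classification of flat metric connections with totally skew torsion, in the formulation of Agricola-Friedrich \cite{cartan-schouten,agricola-friedrich-bismut-flat}. A member $D$ of the Bismut family is a metric generalized connection, so it preserves the eigenbundle decomposition $\Tbb M = C_+ \oplus C_-$ induced by the generalized metric. Via the anchor isomorphisms $C_\pm \cong TM$, such a $D$ induces a pair of classical metric connections $\nabla^{D,\pm}$ on $(M,g)$ with totally skew torsion whose precise form is governed by the three parameters labelling the family.

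The heart of the argument is to show that $R^D=0$ forces the classical Riemann tensors $R^{\nabla^{D,+}}$ and $R^{\nabla^{D,-}}$ to vanish, and, using $dH=0$ and the algebraic structure of the family, that this is equivalent to flatness of the two Bismut connections $\nabla^\pm$ with torsion $\pm H$. I would work in a local frame adapted to the splitting $C_+ \oplus C_-$, expand $R^D$ block by block, and isolate the diagonal components, which encode the classical curvatures. The main obstacle is to handle the three-parameter freedom uniformly: one must verify that the reduction to classical flatness holds across the full family, controlling the mixed $C_\pm \otimes C_\mp$ blocks and the parameter-dependent correction terms so that no cancellations obscure the classical conclusion.

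Once both $\nabla^+$ and $\nabla^-$ are known to be flat metric connections of totally skew, closed torsions $\pm H$ on a 1-connected, complete Riemannian manifold, the Cartan-Schouten-Agricola-Friedrich theorem identifies $(M,g,H)$ locally as either a product of compact simple Lie groups with bi-invariant metrics and Cartan 3-forms, or as the round $\Sbb^7$ with an octonionic parallelism. The $\Sbb^7$ case is excluded by the simultaneous flatness of both $\nabla^+$ and $\nabla^-$ for the same $H$, a property enjoyed by Lie groups (via left and right invariant parallelisms) but not by $\Sbb^7$. Finally, irreducibility of $(M,g)$ rules out non-trivial products and leaves a single simple factor; 1-connectedness and completeness upgrade the local description to a global identification with the corresponding simply-connected compact simple Lie group endowed with a bi-invariant metric; and $H\neq 0$ ensures the Cartan 3-form appears with a nonzero scalar multiple.
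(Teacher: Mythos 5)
Your reduction strategy works only on part of the Bismut family, and it breaks down precisely at the point where the paper needs a genuinely new argument. Expanding the generalized curvature block by block (as in Proposition \ref{prop:curvatures-generalized-Bismut}) gives, after $F=0$, the relation $R^\pm=-\lambda^\pm\nabla^\pm H$, and combining this with the first Bianchi identity yields $(1+3\lambda^+)\Bi(H^2)=0$ and $(1-3\lambda^-)\Bi(H^2)=0$. Hence the diagonal blocks of $\Rcal_D=0$ force $R^\pm=0$ only when $(\lambda^+,\lambda^-)\neq(-\tfrac{1}{3},\tfrac{1}{3})$; for the canonical Levi-Civita member $\lambda^\pm=\mp\tfrac{1}{3}$ both equations are vacuous and one only obtains $R^\pm=-\tfrac{1}{6}\Bi(H^2)$, together with $\nabla^gH=0$ (Proposition \ref{prop:generalized-Riemann-flat}). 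So your central claim, that $\Rcal_D=0$ forces classical flatness of both Bismut connections uniformly across the three-parameter family, is not available in the Levi-Civita case, and the Cartan--Schouten--Agricola--Friedrich theorem cannot be invoked there. (Your mechanism for excluding $\Sbb^7$, simultaneous flatness of $\nabla^+$ and $\nabla^-$, is fine in the cases where you do get both flat; the paper instead rules it out via $\Bi(H^2)=0$.)

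The missing content is exactly the paper's Proposition \ref{prop:flat-canonical-Levi-Civita}: in the Levi-Civita case one only knows that $(M,g)$ is an irreducible symmetric space of compact type (from $R^g=\tfrac{1}{12}\Bi(H^2)-\tfrac{1}{4}H^2$ and $\nabla^gH=0$, which give $\nabla^gR^g=0$ and positive sectional curvature data), carrying a parallel, hence $\Ad(K)$-invariant, $3$-form $H$ on $\pfrak\cong T_xM$ for the symmetric pair $(G,K)$. One must then show, via the Chevalley--Eilenberg complex of $\gfrak$, that if $G$ is compact simple the extension $\tilde H\in(\wedge^3\gfrak^*)^K$ is exact with a $K$-invariant primitive and therefore $H=0$, leaving only the group case $M=(K\times K)/K\cong K$, where $(\wedge^3\kfrak^*)^K\cong H^3(\kfrak,\R)=\R$ pins $H$ down as a multiple of the Cartan form. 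Without an argument of this kind (or some substitute proving Bismut-flatness a posteriori in the Levi-Civita case), your proof covers only the statement of Proposition \ref{prop:flat-canonical-non-Levi-Civita} and not the full theorem.
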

\noindent For the general statement on transitive Courant algebroids, see Theorem \ref{thm:flat-canonical-all}. It is important to stress here that the proof divides into two different cases. In the first one, whenever the generalized Bismut connection is not Levi-Civita, the triple $(M,g,H)$ will turn out to be Bismut-flat, and our result (Proposition \ref{prop:flat-canonical-non-Levi-Civita}) will follow from the Cartan--Schouten theorem \cite{cartan-schouten,agricola-friedrich-bismut-flat} on Bismut-flat manifolds. In the second case, when the generalized Bismut connection is Levi-Civita, the manifold will not be Bismut-flat a priori, and a new argument is needed, involving the study of parallel 3-forms on symmetric spaces of compact type using the Chevalley--Eilenberg complex (Proposition \ref{prop:flat-canonical-Levi-Civita}).

\bigskip

Since the canonical generalized Levi-Civita connection on compact simple Lie groups is flat, a new question arises naturally: are there any non-flat Levi-Civita connections on such Lie groups? In order to give an answer, we give an explicit description of the space of flat left-invariant generalized Levi-Civita connections for a Lie group $G$ with Lie algebra $\gfrak$, which is seen to be strictly smaller than that of generalized Levi-Civita connections.

\begin{customthm}{\ref{thm:lie-groups}}
The space of flat left-invariant generalized Levi-Civita connections on $\Tbb G$ is an affine space modeled on
\[
(S^2\gfrak^*\otimes\gfrak^*)^G/(S^3\gfrak^*)^G \oplus (S^2\gfrak^*\otimes\gfrak^*)/S^3\gfrak^*.
\]
In particular, there are non-flat left-invariant generalized Levi-Civita connections on $\Tbb G$.
\end{customthm}

The structure of the paper is as follows. In Section \ref{sect:curvature-trans-CA} we explicitly describe the curvature of metric generalized connections with pure-type torsion on transitive Courant algebroids. In Section \ref{sect:canonical-fam}, we introduce the Bismut family of generalized connections and prove Theorem \ref{thm:flat-canonical-all}. Finally, in Section \ref{sect:lie-groups}, we give a proof of Theorem \ref{thm:lie-groups}.

\bigskip
\noindent \textbf{Acknowledgements:} We are grateful for the interesting conversations stemming from talks at the GENTLE seminar given by Ilka Agricola, Vicente Cort\'es and Mario Garc\'ia-Fern\'andez.

\bigskip

\noindent \textbf{Notation and conventions: } We work on the smooth category. On a manifold $M$, vector fields will be denoted by $X,Y,Z,W$ and 1-forms by $\alpha,\beta$.

We will use $\gfrak$ to denote both a Lie algebra or a bundle of Lie algebras. The meaning will be clear from the context.

We will also use the following notation: if $(V_i,\prodesc{\cdot}{\cdot}_i)$, for $i=1,2,3$, are vector spaces (or bundles) with nondegenerate symmetric bilinear pairings, then an element $B\in V_1^*\otimes V_2^*\otimes V_3^*$ will be regarded in three different ways:
\begin{itemize}[label={\tiny$\bullet$}]
	\item as a linear map $V_1\otimes V_2\otimes V_3\to\R$, and we write $B(a_1,a_2,a_3)\in\R$ for the image of $a_i\in V_i$,
	\item as a linear map $V_2\to\Hom(V_1,V_3)$, and we write $B_{a_1}a_2\in V_3$,
	\item as a linear map $V_2\otimes V_3\to V_1$, and we write $B(a_2,a_3)\in V_1$.
\end{itemize}
These three different viewpoints are related by
\begin{equation}\label{eq:the-three-lives-of-B}
B(a_1,a_2,a_3)=\prodesc{B_{a_1}a_2}{a_3}_3=\prodesc{a_1}{B(a_2,a_3)}_1.    
\end{equation}

Finally, operators will always be understood to be applied to the symbol immediately following the operator. For instance, for a form $H\in\Omega^2(M,TM)$ and a connection $\nabla$, the expression $\nabla_XH(Y,Z)$ will always mean the tensor $\nabla_XH$ applied to the pair $(Y,Z)$. If the operator is applied to symbols beyond the one following immediately, then parenthesis will always be used. For example,
\[
\nabla_XH(Y,Z)=\nabla_X(H(Y,Z))-H(\nabla_XY,Z)-H(Y,\nabla_XZ).
\]


\bigskip
\section{Curvature on transitive Courant algebroids} \label{sect:curvature-trans-CA}

We start this section by recalling some standard definitions in the theory of generalized geometry.

\begin{definition}
A \textbf{Courant algebroid} over a manifold $M$ is a vector bundle $E$ endowed with a bundle map $\rho:E\to TM$, called the \textbf{anchor}, a non-degenerate bilinear pairing $\prodesc{\cdot}{\cdot}$ and a bilinear bracket $[\cdot,\cdot]$ on $\Gamma(E)$ satisfying
\begin{enumerate}[label=(\alph*)]
    \item $[a,[b,c]]=[[a,b],c]+[b,[a,c]],$
    \item $\Lcal_{\rho a}\prodesc{b}{c}=\prodesc{[a,b]}{c} + \prodesc{b}{[a,c]}$, \label{item:invariance-of-bracket}
    \item $2[a,a]  = \rho^* d\prodesc{a}{a},$   
\end{enumerate}
where $\rho^*:T^*M\to E^*\cong E$ by using $\prodesc{\cdot}{\cdot}$.
\end{definition}

Sections of a Courant algebroid $E$ will be usually denoted by $a, b, c\in \Gamma(E)$.

From axiom \ref{item:invariance-of-bracket} and the non-degeneracy of the pairing one can deduce a Leibniz rule for the bracket:
\[
[a,fb]=f[a,b]+(\Lcal_{\rho a}f)b,\qquad\text{for $f\in\Cinf(M)$.}
\]

Isomorphisms of Courant algebroids are vector bundle isomorphisms between Courant algebroids compatible with the pairings, the anchor maps and the brackets.
 
In general, we have that $\rho\rho^*=0$, which gives a sequence
\begin{equation} \label{eq:sequence-CAs}
0\longrightarrow T^*M\stackrel{\rho^*}{\longrightarrow} E\stackrel{\rho}{\longrightarrow} TM\longrightarrow 0.
\end{equation}

\begin{definition}
If $\rho$ is surjective, then the Courant algebroid $E$ is called \textbf{transitive}. If moreover the sequence \eqref{eq:sequence-CAs} is exact, then $E$ is called \textbf{exact}.
\end{definition}

Any exact Courant algebroid $E$ is isomorphic to the so-called \textbf{generalized tangent bundle} $\Tbb M:=TM\oplus T^*M$ with the anchor map $\rho(X+\alpha):=X$, the pairing $\prodesc{X+\alpha}{Y+\beta}=\frac{1}{2}(\beta(X)+\alpha(Y))$ and the bracket 
\[ [X+\alpha, Y+\beta] :=[X,Y]+\Lcal_X\beta-i_Yd\alpha + i_Y i_X H, \]
known as the Dorfman bracket, for some closed $H\in \Omega^3(M)$. The form $H$ is not uniquely determined, but can be chosen within its cohomology class, which is known as the \v{S}evera class of $E$.

In the case of transitive Courant algebroids, there is also a standard form, which is more convoluted. Recall that an isotropic splitting of $E$ is a splitting of \eqref{eq:sequence-CAs} whose image is isotropic with respect to $\prodesc{\cdot}{\cdot}$. These always exist on any Courant algebroid.

\begin{proposition}[{\cite{chen-stienon-xu,garcia-fernandez-heterotic}}] \label{prop:transitive-cas-normal-form}
A transitive Courant algebroid $E\to M$ induces a quadratic Lie algebra bundle $(\gfrak\to M,[\cdot,\cdot],\prodesc{\cdot}{\cdot})$, whose sections we denote by $r$ and $s$. A choice of isotropic splitting on $E$ induces
\begin{enumerate}[label=\upshape{(\alph*)}]
    \item a metric connection $\nabla$ on $\gfrak$ making $[\cdot,\cdot]$ parallel, that is, such that
	\[
	\nabla_X[r,s]=[\nabla_Xr,s]+[r,\nabla_Xs],
	\]
	and a form $F\in\Omega^2(M,\gfrak)$ such that
	\[
	R^\nabla(X,Y)r=[F(X,Y),r].
	\]
    \item a form $H\in\Omega^3(M)$ satisfying the Bianchi identity
	\[
	dH-\langle F\wedge F\rangle=0.
	\]
\end{enumerate}
In this case, $E$ is isomorphic to $\Tbb M\oplus\gfrak$ with
\begin{enumerate}[label=\upshape{(\roman*)}]
    \item anchor $\rho(X+\alpha+r)=X$,
	\item pairing $\prodesc{X+\alpha+r}{Y+\beta+s}=\frac{1}{2}(\beta(X)+\alpha(Y))+\prodesc{r}{s}$,
	\item and bracket
	\begin{align}\label{eq:bracket}
	[X+\alpha+r, Y+\beta+s] &=[X,Y]+\Lcal_X\beta-i_Yd\alpha+i_Yi_XH \nonumber \\
    & \qquad\qquad -\prodesc{i_XF}{s}+\prodesc{i_YF}{r}+\prodesc{\nabla r}{s} \\
	&\qquad\qquad  +F(X,Y)+\nabla_Xs-\nabla_Yr+[r,s]. \nonumber
	\end{align}
\end{enumerate}
\end{proposition}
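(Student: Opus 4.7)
The plan is to build the quadratic Lie algebra bundle $\gfrak$ intrinsically from $E$, and then use the isotropic splitting to transfer the entire Courant structure onto $\Tbb M\oplus\gfrak$. I would first set $\gfrak:=\ker\rho/\rho^*(T^*M)$, which is well-defined since $\rho\rho^*=0$ gives $\rho^*(T^*M)\subset\ker\rho$. The pairing restricts to $\ker\rho$ with radical exactly $\rho^*(T^*M)$, because $\prodesc{\rho^*\alpha}{e}=\alpha(\rho e)=0$ for $e\in\ker\rho$, while non-degeneracy of $\prodesc{\cdot}{\cdot}$ on $E$ rules out anything else; hence $\gfrak$ inherits a non-degenerate pairing. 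The Courant bracket preserves $\Gamma(\ker\rho)$ (since $\rho$ is a bracket homomorphism) and descends to $\gfrak$: antisymmetry modulo $\rho^*(T^*M)$ comes from axiom (c) polarized at $a+b$, Jacobi from (a), and $\mathrm{ad}$-invariance of the pairing from (b) restricted to $\ker\rho$.

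Next I would fix an isotropic splitting $\sigma:TM\to E$ and define $\widetilde{\gfrak}:=\sigma(TM)^\perp\cap\ker\rho$, which is complementary to $\rho^*(T^*M)$ inside $\ker\rho$ and canonically isomorphic to $\gfrak$. This yields the vector bundle decomposition $E\cong\sigma(TM)\oplus\rho^*(T^*M)\oplus\widetilde{\gfrak}\cong\Tbb M\oplus\gfrak$. Isotropy of $\sigma(TM)$ and $\rho^*(T^*M)$, the duality $\prodesc{\sigma X}{\rho^*\alpha}=\tfrac{1}{2}\alpha(X)$, the orthogonality $\widetilde{\gfrak}\perp\sigma(TM)$ by construction, and the induced pairing on $\gfrak$ force $\prodesc{\cdot}{\cdot}$ to take the stated form, while the anchor is manifestly as stated.

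The main work is to read off the bracket. By $\Cinf(M)$-bilinearity it suffices to compute on pure sections. The standard Courant-algebroid identities $[\rho^*\alpha,\rho^*\beta]=0$, $[\rho^*\alpha,r]=0$ and $[a,\rho^*\alpha]=\rho^*\Lcal_{\rho a}\alpha$ (which follow from axioms (b) and (c) and rule out a $\widetilde{\gfrak}$-component on the left-hand sides) already determine all brackets involving $\rho^*(T^*M)$. The remaining data come from three brackets: the $\gfrak$-component of $[\sigma X,r]$ defines a derivation, hence a connection $\nabla_X r$; the $\gfrak$-component of $[\sigma X,\sigma Y]$ defines $F\in\Omega^2(M,\gfrak)$; and the $T^*M$-component of $[\sigma X,\sigma Y]$ defines $H\in\Omega^3(M)$, with its total antisymmetry forced by axiom (c). Axiom (b) then fixes the remaining $T^*M$-components of $[\sigma X,r]$ and $[r,s]$ to be $-\prodesc{i_X F}{r}$ and $\prodesc{\nabla r}{s}$ respectively, and also forces $\nabla$ to be metric.

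Finally, axiom (a) supplies the structural identities by projecting the Jacobiator onto the various summands. Jacobi on $(\sigma X,r,s)$ gives that $\nabla$ makes $[\cdot,\cdot]$ parallel; Jacobi on $(\sigma X,\sigma Y,r)$ yields $R^\nabla(X,Y)r=[F(X,Y),r]$; and Jacobi on three vector sections gives the Bianchi identity $dH=\langle F\wedge F\rangle$. The main obstacle I anticipate is the bookkeeping of components across the splitting and the repeated use of axioms (a)--(c) to extract these identities cleanly; none of the individual steps is deep, but the full deduction requires patient tracking of the three summands and of the way the pairings interact under the chosen lift.
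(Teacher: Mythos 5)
Your proposal is correct and follows essentially the standard construction that the paper itself does not reprove but imports from \cite{chen-stienon-xu,garcia-fernandez-heterotic}: quotient $\ker\rho$ by $\rho^*(T^*M)$ to obtain the quadratic Lie algebra bundle, use the isotropic splitting to decompose $E\cong\sigma(TM)\oplus\rho^*(T^*M)\oplus\widetilde{\gfrak}$, read off $\nabla$, $F$ and $H$ from the components of the bracket, and extract the structural identities (metricity and parallelism of $[\cdot,\cdot]$, the curvature identity, the Bianchi identity) from the axioms exactly as you outline. The only points needing care are bookkeeping ones you already flagged: the identification of $T^*M$ with $\rho^*(T^*M)$ must be scaled so that the pairing comes out with the factor $\tfrac12$ in the stated normal form, and the total skew-symmetry of $H$ uses axiom (b) (for the last two arguments) in addition to axiom (c).
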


Sections of $\gfrak$ we will typically denote by $r$, $s$ and $t$. The full equivalence between transitive Courant algebroids and the data described in Proposition \ref{prop:transitive-cas-normal-form} is much more involved than in the exact case and it is not relevant for our purposes. 

\begin{remark*}
The analysis that we will do on a transitive algebroid could be carried out on a regular one just by replacing the tangent bundle by the pertinent regular foliation. On general Courant algebroids, it could be carried out leaf-wise.
\end{remark*}

We turn now to the standard notions in generalized Riemannian geometry.

\begin{definition}
A \textbf{generalized metric} on a Courant algebroid $E\to M$ is a bundle map $\Gcal:E\to E$ that is orthogonal with respect to $\prodesc{\cdot}{\cdot}$ and satisfies $\Gcal^2= 1$. It is called \textbf{admissible} if $\rho_+:V_+\to TM$ is an isomorphism.
\end{definition}

A generalized metric $\Gcal$ is completely determined by its $+1$-eigenbundle $V_+\subseteq E$, or its $-1$-eigenbundle $V_-\subseteq E$. On the standard transitive Courant algebroid $\Tbb M\oplus \gfrak$ (as in Proposition \ref{prop:transitive-cas-normal-form}), after suitable gauge transformations (Courant algebroid isomorphisms covering the identity), an admissible metric can always be written in the form \cite{garcia-fernandez-heterotic}
\[
\Gcal(X+\alpha+r):=g^{-1}\alpha+gX-r,
\]
for some pseudo-Riemannian metric $g$ on $M$. Equivalently,
\begin{align*}
V_+ &=\{X+gX: X\in TM\},\\
V_- &=\{X-gX+r: X\in TM, \ r\in\gfrak\}.
\end{align*}
From now on, we focus on this kind of generalized metrics for a given $g$.

For $a\in E$, we denote the projections of $a$ onto $V_\pm$ as follows:
\[
a_\pm:=\frac{1}{2}(1\pm \Gcal)a.
\]
We will also write $X^\pm:=X\pm gX$ and $\alpha^\pm:=g^{-1}\alpha\pm\alpha$ for the lifts of vectors $X$ and 1-forms $\alpha$ on $M$ to elements of $V_\pm$. Notice that we have the relations $\prodesc{X^\pm}{Y^\pm}=\pm g(X,Y)$ and $\prodesc{\alpha^\pm}{\beta^\pm}=\pm g(\alpha,\beta)$.

Notice as well that using $g$ we can now regard $F$ as an element in 
\[
\Omega^1(\gfrak,\so(TM)):=\Gamma(\gfrak^*\otimes\so(TM)).
\]
According to our convention \eqref{eq:the-three-lives-of-B}, we write $F_r X\in\Xfrak(M)$, i.e.
\[
\prodesc{F(X,Y)}{r}=g(F_r X,Y).
\]
Observe that therefore $\prodesc{i_XF}{r}=g(F_r X)$.

As in the case of exact Courant algebroids, the bracket of $\Tbb M\oplus \gfrak$ is intimately related to the Bismut connections of $(g,H)$, given by $\nabla^\pm:=\nabla^g\pm \frac{1}{2}H$, where $\nabla^g$ is the Levi-Civita connection of $g$. The proof can be found in Appendix \ref{app:computations}.

\begin{lemma} \label{lem:brackets-in-regular-ca}
The bracket of $\Tbb M\oplus\gfrak$ and the Bismut connections are related as follows: if $X,Y\in\Xfrak(M)$, then
\begin{align*}
[X^\pm,Y^\mp] &=(\nabla^\mp_XY)^\mp-(\nabla^\pm_YX)^\pm + F(X,Y), \\
[X^\pm,Y^\pm] &=((\nabla^\pm X)^*Y+[X,Y])^\pm -((\nabla^\pm X)^*Y)^\mp+F(X,Y), \\
[X^\pm,r] &=-\prodesc{i_XF}{r}+\nabla_Xr,
\end{align*}
where the adjoint $^*$ is taken with respect to $g$.
\end{lemma}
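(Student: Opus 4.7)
The plan is to substitute the lifts $X^\pm = X \pm gX$ into the explicit bracket formula \eqref{eq:bracket} from Proposition \ref{prop:transitive-cas-normal-form} and, after invoking the Koszul formula for $\nabla^g$, to recognize the resulting terms as components of the Bismut connections $\nabla^\pm = \nabla^g \pm \tfrac{1}{2}H$. The third identity is immediate: since $r \in \Gamma(\gfrak)$ has vanishing $TM$ and $T^*M$ components, only the terms $-\prodesc{i_X F}{s} + \nabla_X s$ and their analogues from \eqref{eq:bracket} survive, giving directly $-\prodesc{i_X F}{r} + \nabla_X r$.

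For the mixed case $[X^\pm, Y^\mp]$, applying \eqref{eq:bracket} reduces to
\[
[X,Y] + \Lcal_X(\mp gY) - i_Y d(\pm gX) + i_Y i_X H + F(X,Y).
\]
The Koszul formula yields $\Lcal_X(gY)(Z) + d(gX)(Y,Z) = g(\nabla^g_X Y, Z) + g(\nabla^g_Y X, Z)$, so the $T^*M$-component becomes $\mp g(\nabla^g_X Y) \mp g(\nabla^g_Y X) \pm H(X,Y,\cdot)$. Splitting the $H$ contribution symmetrically between $\nabla^\mp$ and $\nabla^\pm$ rewrites this as $\mp g(\nabla^\mp_X Y) \mp g(\nabla^\pm_Y X)$. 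The tangent component $[X,Y]$ matches $\nabla^\mp_X Y - \nabla^\pm_Y X$, since the Bismut torsion terms carry opposite signs and cancel. These assemble into $(\nabla^\mp_X Y)^\mp - (\nabla^\pm_Y X)^\pm + F(X,Y)$.

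The parallel case $[X^\pm, Y^\pm]$ is the most delicate. Here \eqref{eq:bracket} gives $[X,Y] + \Lcal_X(\pm gY) - i_Y d(\pm gX) + i_Y i_X H + F(X,Y)$, whose tangent part is still $[X,Y]$ but whose $T^*M$-part is nontrivial. Combining the Koszul formula with the cyclic symmetry $H(X,Y,Z) = H(Z,X,Y)$, one checks that the $T^*M$-part evaluated at $Z$ simplifies to $\pm g\bigl(2(\nabla^\pm X)^* Y + [X,Y],\, Z\bigr)$. To finish, I decompose the resulting section along $V_\pm$ using the identity $(X + \alpha)_\pm = \tfrac{1}{2}(X \pm g^{-1}\alpha)^\pm$; with $W = (\nabla^\pm X)^* Y$, this produces exactly $((\nabla^\pm X)^* Y + [X,Y])^\pm - ((\nabla^\pm X)^* Y)^\mp + F(X,Y)$.

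The main obstacle lies in this last case: one needs the cyclic symmetry of the $3$-form $H$ to identify the $T^*M$-valued combination of $\Lcal$, $d$ and $H$ terms with $\pm g(2(\nabla^\pm X)^* Y + [X,Y], \cdot)$, and then to correctly decompose the resulting section of $\Tbb M$ along $V_+$ and $V_-$ in order to recover the asymmetric form of the claim.
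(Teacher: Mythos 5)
Your proposal is correct and follows essentially the same route as the paper: expand the lifts $X^\pm=X\pm gX$ in the standard-form bracket \eqref{eq:bracket}, use the Koszul formula to identify $g^{-1}(\Lcal_X(gY)-i_Yd(gX)\pm i_Yi_XH)=2(\nabla^\pm X)^*Y+[X,Y]$, and split the result along $V_\pm$; the only organizational difference is that the paper treats the mixed case by first characterizing $\nabla^\pm_XY=\rho([X^\mp,Y^\pm]_\pm)$ as the metric connections with torsion $\pm H$ and then using $[X^\pm,Y^\mp]=-[Y^\mp,X^\pm]$, whereas you verify that case head-on. One small slip to fix: in the mixed case the $T^*M$-component carries $+H(X,Y,\cdot)$ for both sign choices, since the term $i_Yi_XH$ in \eqref{eq:bracket} does not depend on the lifts, not $\pm H(X,Y,\cdot)$; with the correct sign your rewriting as $\mp g(\nabla^\mp_XY)\mp g(\nabla^\pm_YX)$ goes through exactly as you claim.
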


Another key notion in generalized Riemannian geometry is that of generalized connections and their torsion.

\begin{definition}
A \textbf{generalized connection} on a Courant algebroid $E$ is a linear operator $D:\Gamma(E)\to\Gamma(E^*\otimes E)$ such that
\begin{align*}
    D(fa)&=\rho^*df\otimes a+fDa,\\
    \rho^*d\prodesc{a}{b}&= (Da)^*b+(Db)^*a.
\end{align*}

The \textbf{torsion} of $D$ is the tensor field $\mathcal{T}\in\Gamma(\wedge^3 E^*)$ given, as a map $\Gamma(\wedge^2 E)\to \Gamma(E)$, by
\[
\mathcal{T}(a,b):=D_ab-D_ba-[a,b]+(Da)^*b.
\]

If $\Gcal$ is a generalized metric on $E$, then $D$ is said to be metric-compatible, or just \textbf{metric}, if $D\Gcal=0$, i.e., if $D_a(\Gcal b)=\Gcal(D_ab)$. Equivalently, if $D_a(\Gamma(V_\pm))\subseteq\Gamma(V_\pm)$, for all $a\in\Gamma(E)$. Moreover, the torsion of $D$ is said to be of \textbf{pure type} if $\mathcal{T}\in\Gamma(\wedge^3 V_+ \oplus \wedge ^3 V_-)$. 
\end{definition}

It is also well-known that the torsion of a metric generalized connection is of pure-type if and only if \cite[Lem. 3.2]{garcia-fernandez-spinors}
\begin{equation}\label{eq:pure-torsion}
D_{a_\mp}b_\pm=[a_\mp,b_\pm]_\pm.    
\end{equation}

We now characterize metric generalized connections with pure-type torsion on the transitive Courant algebroid $\Tbb M\oplus\gfrak$. To express the torsion-freeness condition, let us introduce some further notation.

\begin{definition}
For $V$ a vector space (or bundle), we define the \textbf{Bianchator} map $\Bi:\wedge^2V^*\otimes V^*\to \wedge^3V^*$ by
\[
\Bi(B)(v_1,v_2,v_3):=B(v_1,v_2,v_3)+B(v_2,v_3,v_1)+B(v_3,v_2,v_1).
\]
We also also define its extension $\Bi$ to $\wedge^2V^*\otimes V^{*\otimes k}$ by acting on the first three components. 
\end{definition} 

\begin{definition}
The \textbf{Cartan 3-form} of the quadratic Lie algebra bundle $\gfrak$ is the form $c\in\Gamma(\wedge^3\gfrak^*)$ given by $c:=\prodesc{[\cdot,\cdot]}{\cdot}$. 
\end{definition}

We repeatedly make use of our convention \eqref{eq:the-three-lives-of-B} without mention.

\begin{proposition} \label{prop:connection-tensors}
A metric generalized connection $D$ with pure-type torsion on $\Tbb M\oplus\gfrak$ is given by the following data:
\begin{align*}
  	A&\in\Omega^1(\gfrak,\End TM),& 
	B^\pm&\in\Omega^1(M,\so(TM)),&
	C&\in\Omega^1(M,\End\gfrak), \\
	L&\in\Omega^1(\gfrak, \so(\gfrak)),&
	W&\in\Omega^1(\gfrak, \so(TM)),&
	N&\in\Omega^1(M,\so(\gfrak)).  
\end{align*}
The connection $D$ is then given by
\begin{enumerate}[label=\upshape{(\roman*)}]
    \item $D_{X^\mp}Y^\pm = (\nabla^\pm_XY)^\pm+\frac{1\mp 1}{2}F(X,Y)$, \label{item:D-mp-pm}
    \item $D_{X^\pm}Y^\pm = (\nabla^\pm_XY+B^\pm_XY)^\pm+\frac{1\mp 1}{2}A(X,Y)$, \label{item:D-pm-pm}
    \item $D_{X^+}r =-\frac{1}{2}(F_r X)^-+\nabla_Xr$, \label{item:D-p-g}
    \item $D_{X^-}r =(A_r X)^-+\nabla_Xr+N_Xr$, \label{item:D-m-g}
    \item $D_r X^+ = \frac{1}{2}(F_r X)^+$, \label{item:D-g-p}
    \item $D_r X^- =(W_r X)^-+C_Xr$, \label{item:D-g-m}
    \item $D_r s =C(r,s)^-+L_rs$. \label{item:D-g-g}
\end{enumerate}

Moreover, $D$ is torsion-free if and only if the following conditions are satisfied:
\begin{enumerate}[resume,label=\upshape{(\roman*)}]
	\item $\Bi(B^\pm)=\mp H$, \label{item:torsion-pm-pm-pm}
	\item $W(X,Y)=A(X,Y)-A(Y,X)-F(X,Y)$,
	\item $N(r,s)=C(r,s)-C(s,r)$,
	\item $\Bi(L)=c$. \label{item:torsion-g-g-g}
\end{enumerate}
\end{proposition}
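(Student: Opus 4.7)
The plan is to unpack the metric compatibility and pure-type torsion conditions directly on the normal-form model $\Tbb M\oplus\gfrak$ from Proposition \ref{prop:transitive-cas-normal-form}. I would first establish the shape of $D$: metric compatibility $D\mathcal{G}=0$ means that $D_a$ preserves $V_+$ and $V_-$, so $D$ is completely specified by giving, for each input type $X^+,X^-,r$, its $V_+$- and $V_-$-valued outputs. The pure-type torsion criterion \eqref{eq:pure-torsion} then forces the opposite-sign outputs $D_{X^\mp}(\cdot)^\pm$, $D_{X^+}r$ and $D_r(\cdot)^+$ to equal the corresponding $V_\pm$-projections of the Courant bracket; substituting the brackets from Lemma \ref{lem:brackets-in-regular-ca} and decomposing each 1-form $\beta\in T^*M$ as $\tfrac12\beta^+-\tfrac12\beta^-$ yields (i), (iii) and (v) at once. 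The remaining same-sign outputs carry the free data of $D$, which I parametrize by the six tensors in the statement: $B^\pm$ as the $TM$-part of $D_{X^\pm}Y^\pm-\nabla^\pm_XY$; $A$ as the $\gfrak$-part of $D_{X^-}Y^-$; $L$ and $N$ as the $\gfrak$-parts of $D_rs$ and $D_{X^-}r$; and $W$, $C$ from $D_rX^-$. Metric compatibility of $D$ within $V_\pm$, combined with the $g$-metricity of the Bismut connections $\nabla^\pm$, then forces $B^\pm\in\Omega^1(M,\so(TM))$, $L\in\Omega^1(\gfrak,\so(\gfrak))$ and $N\in\Omega^1(M,\so(\gfrak))$. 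Finally, metric compatibility applied to pairs formed by a $\gfrak$-section and a $TM$-type section of $V_-$ forces the $TM$-components of $D_{X^-}r$ and $D_rs$ to be exactly $A_rX$ and $C(r,s)$ via convention \eqref{eq:the-three-lives-of-B}; this pins down (iv) and (vii) from (ii) and (vi).

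Second, I would extract the torsion-free conditions. Since $\mathcal{T}$ is pure-type, it vanishes automatically on any mixed-sign triple, and the only nontrivial identities come from the five homogeneous triples $(X^+,Y^+,Z^+)$, $(X^-,Y^-,Z^-)$, $(X^-,Y^-,r)$, $(X^-,r,s)$ and $(r,s,t)$. Each is computed from
\[
\mathcal{T}(a,b,c)=\prodesc{D_ab-D_ba-[a,b]}{c}+\prodesc{D_ca}{b}
\]
by substituting (i)--(vii) and the bracket \eqref{eq:bracket}. In the $(X^+,Y^+,Z^+)$ case, the $\nabla^+$-contributions combine via $\nabla^+_XY-\nabla^+_YX-[X,Y]=H(X,Y)$ to produce $H(X,Y,Z)$, and the $B^+$ terms regroup into $\Bi(B^+)$, yielding (viii) with the upper sign; the $(X^-,Y^-,Z^-)$ case is symmetric with $\nabla^-$ and gives the lower sign. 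The $(X^-,Y^-,r)$ case compares the three $A$-contributions (one from each $D$-term) against the $F(X,Y)$ appearing in \eqref{eq:bracket}, producing (ix); the $(X^-,r,s)$ case analogously yields (x); and $(r,s,t)$ reduces, once the $C$- and $L$-terms are regrouped, to $\Bi(L)=c$, the Cartan form coming from $[r,s]$ in \eqref{eq:bracket}.

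The main obstacle is the sign and normalization bookkeeping — especially the $V_\pm$-decomposition of the 1-form terms $\prodesc{i_XF}{r}$ appearing in Lemma \ref{lem:brackets-in-regular-ca}, and the need to keep the three avatars of $A$ and $C$ under convention \eqref{eq:the-three-lives-of-B} straight across (ii), (iv), (vi) and (vii). Once a single representative torsion computation (say for $(X^+,Y^+,Z^+)$) is carried out cleanly, the other four follow by the same template.
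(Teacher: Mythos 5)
Your proposal is correct and follows essentially the same route as the paper's proof: items (i), (iii), (v) are obtained from the pure-type torsion condition \eqref{eq:pure-torsion} together with Lemma \ref{lem:brackets-in-regular-ca}, the remaining same-sign components are parametrized by difference tensors whose skew-symmetries and coincidences (the $TM$-parts of $D_{X^-}r$ and $D_rs$ being $A_rX$ and $C(r,s)$) are forced by compatibility, and (viii)--(xi) are read off by evaluating the torsion on the five homogeneous-type triples. The one point to phrase carefully in the write-up is that these symmetries come from the pairing-compatibility axiom of a generalized connection (applied along $\rho$ of the direction argument, e.g.\ $0=\Lcal_{\rho r}\prodesc{X^-}{s}$), together with tensoriality of $D_r$ in both slots since $\rho r=0$, rather than from $D\mathcal{G}=0$ itself, which only gives preservation of $V_\pm$ --- but this is exactly the computation you indicate.
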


\begin{proof}
Items \ref{item:D-mp-pm} and \ref{item:D-p-g} follow directly from Lemma \ref{lem:brackets-in-regular-ca}, using that $D$ has pure-type torsion:
\begin{align*}
D_{X^\mp}Y^\pm &=[X^\mp,Y^\pm]_\pm=(\nabla^\pm_XY)^\pm+\frac{1\mp 1}{2}F(X,Y), \\
D_{X^+}r &=[X^+,r]_-=-\frac{1}{2}(F_rX)^-+\nabla_Xr. 
\end{align*}

For \ref{item:D-pm-pm}, define two operators $\nablabarpm$ by $\nablabarpm_XY:=\rho(D_{X^\pm}Y^\pm)$. These are metric connections on $TM$, as can be readily checked in a similar manner as in the proof of Lemma \ref{lem:brackets-in-regular-ca} for $\nabla^\pm$. Hence, there are $B^\pm\in\Omega^1(M,\so(TM))$ such that $\nablabarpm=\nabla^\pm+B^\pm$. Since $V_+\cong TM$, we have that $D_{X^+}Y^+=(\nablabar\hspace{0pt}^+_XY)^+$. On the other hand, there is some tensor $A\in\Omega^1(\gfrak,\End TM)$ such that
\[
D_{X^-}Y^-=(\nablabar\hspace{0pt}^-_XY)^-+A(X,Y).
\]

To prove \ref{item:D-m-g}, notice that the expression $\rho(D_{X^-}r)$ is $\Cinf(M)$-linear in both $X$ and $r$, since $\rho r=0$, so there is a tensor $Q\in\Omega^1(\gfrak,\End TM)$ and a connection $\nabla'$ on $\gfrak$ such that
\[
D_{X^-}r=(Q_r X)^-+\nabla'_Xr.
\]
Since $D$ is compatible with the pairing of $E$, we have that, using \ref{item:D-pm-pm},
\[
0=\Lcal_{\rho X^-}\prodesc{Y^-}{r}=\prodesc{A(X,Y)}{r}-g(Y,Q_r X)=g(Y,A_r X-Q_r X),
\]
so that $Q=A$. On the other hand,
\[
\Lcal_X\prodesc{r}{s}=\Lcal_{\rho X^-}\prodesc{r}{s}=\prodesc{\nabla'_Xr}{s}+\prodesc{r}{\nabla'_Xs},
\]
so $\nabla'$ is a metric connection. Hence, there is a tensor $N\in\Omega^1(M,\so(\gfrak))$ such that $\nabla'_Xr=\nabla_Xr +N_X r$.

Turning now to \ref{item:D-g-p}, \ref{item:D-g-m} and \ref{item:D-g-g}, notice that for all $a\in\Gamma(E)$ the expression $D_r a$ is $\Cinf(M)$-linear in $a$, since $\rho r=0$, so it is actually a tensor. Notice first that
\[
D_r X^+=[r,X^+]_+=\frac{1}{2}(F_Xr)^+,
\]
by Lemma \ref{lem:brackets-in-regular-ca}, which settles \ref{item:D-g-p}. Secondly, there are tensors $W\in\Omega^1(\gfrak, \End TM)$ and $C\in\Omega^1(M, \End\gfrak)$ such that
\[
D_r X^-=(W_r X)^-+C_Xr.
\]
Furthermore, since $D$ is compatible with the pairing,
\[
0=\Lcal_{\rho r}\prodesc{X^-}{Y^-}=-g(W_r X,Y)-g(X,W_r Y),
\]
so that actually $W\in\Omega^1(\gfrak,\so(TM))$, giving \ref{item:D-g-m}. Finally, for \ref{item:D-g-g}, there are also tensors $P\in\Omega^1(M,\End\gfrak)$ and $L\in\Omega^1(\gfrak,\End\gfrak)$ such that
\[
D_rs=P(r,s)^-+L_rs.
\]
But observe that
\[
0=\Lcal_{\rho r}\prodesc{X^-}{s}=\prodesc{C_Xr}{s}-g(X,P(r,s))=g(X,C(r,s)-P(r,s)),
\]
so that actually $P=C$. Also,
\[
0=\Lcal_{\rho r}\prodesc{s}{t}=\prodesc{L_rs}{t}+\prodesc{s}{L_rt},
\]
so that in fact $L\in\Omega^1(\gfrak,\so(\gfrak))$, finally establishing \ref{item:D-g-g}.

For \ref{item:torsion-pm-pm-pm}-\ref{item:torsion-g-g-g}, we compute now the torsion of $D$ in terms of all this data, making use of Lemma \ref{lem:brackets-in-regular-ca} throughout the computations:
\begin{align*}
\mathcal{T}^D(X^\pm,Y^\pm,Z^\pm) &= \prodesc{D_{X^\pm}Y^\pm-D_{Y^\pm}X^\pm-[X^\pm,Y^\pm]}{Z^\pm}+\prodesc{D_{Z^\pm}X^\pm}{Y^\pm} \\
&= \pm g(\nablabar^\pm_XY-\nablabar^\pm_YX-(\nabla^\pm X)^*Y-[X,Y],Z)\pm g(\nablabar^\pm_ZX,Y)\\
&=H(X,Y,Z)\pm B^\pm(X,Y,Z)\mp B^\pm(Y,X,Z)\pm B^\pm(Z,X,Y) \\
&=(H\pm \Bi(B^\pm))(X,Y,Z) \\
\mathcal{T}^D(X^-,Y^-,r) &=\prodesc{A(X,Y)-A(Y,X)-[X^-,Y^-]}{r}-g(W_r X,Y) \\
&=\prodesc{A(X,Y)-A(Y,X)-F(X,Y)-W(X,Y)}{r}; \\
\mathcal{T}^D(X^-,r,s) &=\prodesc{\nabla'_Xr-C_Xr-[X^-,r]}{s}+\prodesc{C_Xs}{r} \\
&=\prodesc{N_Xr-C_Xr}{s}+\prodesc{C_Xs}{r} \\
&= g(N(r,s)-C(r,s)+C(s,r), X)\\
\mathcal{T}^D(r,s,t) &=\prodesc{L_rs-L_s r-[r,s]}{t}+\prodesc{L_t r}{s} \\
&=L(r,s,t)-L(s,r,t)+L(t,r,s)-c(r,s,t) \\
&=(\Bi(L)-c)(r,s,t). \qedhere
\end{align*}
\end{proof}

Lastly, we recall the notion of generalized curvature, in the sense of \cite{garcia-fernandez-spinors} (see also \cite{CPR:2024}), which will play a key role in the rest of the paper.

\begin{definition}
Let $D$ be a generalized connection on $E$ and $\Gcal$ a generalized metric. The \textbf{generalized curvatures} of the pair $(D,\Gcal)$ are the operators $\Rcal^\pm_D\in\Gamma(V_\pm^*\otimes V_\mp^*\otimes\so(V_\pm))$ given by
\[
\Rcal^\pm_D(a_\pm,b_\mp):=\Rcal_0(a_\pm,b_\mp),\qedhere
\]
which correspond to the tensorial parts of the na\"ive curvature $\Rcal_0:\Gamma(E)^2\to\Gamma(\so(E))$, defined by
\begin{equation*}
\Rcal_0(a,b)c:=D_aD_bc-D_bD_ac-D_{[a,b]}c.     
\end{equation*}
\end{definition}

Using Proposition \ref{prop:connection-tensors}, we can now explicitly compute the generalized curvatures of metric generalized connections with pure-type torsion. The interested reader can find the computations in Appendix \ref{app:computations}.

\begin{proposition} \label{prop:curvatures-regular-CA}
Let $D$ be a metric generalized connection with pure-type torsion on $\Tbb M\oplus\gfrak$ defined by the tensors in Proposition \ref{prop:connection-tensors}. Then its generalized curvatures $\Rcal^\pm_D$ are given by
\begin{align*}
\Rcal^+_D(X^+,Y^-)Z^+ &= \left( R^+(X,Y)Z-(\nabla^+_YB^+)_XZ -\frac{1}{2} F_{F(X,Y)}Z \right)^+, \\
\Rcal^+_D(X^+,r)Z^+ &= \frac{1}{2}\left( (\nabla^+_XF)_r Z+B^+_X(F_r Z)-F_r B^+_XZ+B^+_{F_r X}Z \right)^+, \\
\Rcal^-_D(X^-,Y^+)Z^- &= \left(R^-(X,Y)Z-(\nabla^-_YB^-)_XZ + A_{F(Y,Z)}X\right. \\
&\qquad\qquad \left. -W_{F(X,Y)}Z+\frac{1}{2}F_{A(X,Z)}Y \right)^- \\
&\qquad\qquad -\nabla^-_YA(X,Z)+\nabla^-_XF(Y,Z)-F(Y,B^-_XZ)\\
&\qquad\qquad -F(H(X,Y),Z)-C_Z(F(X,Y)),\\
\Rcal^-_D(X^-,Y^+)r &=\left(-\frac{1}{2}(\nabla^-_XF)_r Y-(\nabla^-_YA)_r X-\frac{1}{2}B^-_XF_r Y\right. \\
&\qquad\qquad\left.+\frac{1}{2}F_r(H(X,Y))-C(F(X,Y),r) +\frac{1}{2}F_{N_Xr}Y\right)^- \\
&\qquad\qquad +R^\nabla(X,Y)r -(\nabla^-_YN)_Xr-\frac{1}{2}A(X,F_r Y)\\
&\qquad\qquad-F(Y,A_r X)+L_{F(X,Y)}r, \\
\Rcal^-_D(r, Y^+)Z^- &=\left(-(\nabla^-_YW)_r Z+C(r,F(Y,Z))+\frac{1}{2}F_{C_Zr}Y-\frac{1}{2}B^-_{F_r Y}Z
\right)^- \\
&\qquad\qquad -(\nabla^-_YC)_Zr+L_r(F(Y,Z))-F(Y,W_r Z)\\
&\qquad\qquad +\frac{1}{2}A(F_r Y, Z)-\frac{1}{2}F(F_r Y,Z), \\
\Rcal^-_D(r,Y^+)s &=\left( -\nabla^-_YC(r,s)-\frac{1}{2}W_r F_s Y+\frac{1}{2}F_{L_rs}Y \right. \\
&\qquad\qquad \left. +\frac{1}{4}F_s F_r Y +\frac{1}{2}A_s F_r Y\right)^- \\
&\qquad\qquad -(\nabla_YL)_rs -\frac{1}{2}C_{F_s Y}r-F(Y,C(r,s))+N_{F_r Y}s,
\end{align*}
where $R^\pm$ denote the Bismut curvatures.
\end{proposition}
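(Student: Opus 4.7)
The plan is a direct case-by-case computation from the definition
\[
\Rcal_0(a,b)c = D_aD_bc - D_bD_ac - D_{[a,b]}c,
\]
combined with the explicit description of $D$ in Proposition~\ref{prop:connection-tensors} and the bracket formulas of Lemma~\ref{lem:brackets-in-regular-ca}. Since $\Rcal^\pm_D$ is tensorial in all three entries, one may evaluate it on any convenient lifts of vectors and sections of $\gfrak$, and extract the tensorial part of $\Rcal_0$ by inspection.

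To illustrate the uniform procedure I would follow in all five cases, consider $\Rcal^+_D(X^+,Y^-)Z^+$. Clauses (i) and (ii) of Proposition~\ref{prop:connection-tensors} yield
\begin{align*}
D_{X^+}D_{Y^-}Z^+ &= (\nabla^+_X\nabla^+_YZ + B^+_X\nabla^+_YZ)^+, \\
D_{Y^-}D_{X^+}Z^+ &= (\nabla^+_Y\nabla^+_XZ + \nabla^+_YB^+_XZ)^+.
\end{align*}
Expanding $[X^+,Y^-]=(\nabla^-_XY)^- - (\nabla^+_YX)^+ + F(X,Y)$ via Lemma~\ref{lem:brackets-in-regular-ca} and applying clauses (i), (ii) and (v) gives
\[
D_{[X^+,Y^-]}Z^+ = \bigl(\nabla^+_{[X,Y]}Z - B^+_{\nabla^+_YX}Z + \tfrac12 F_{F(X,Y)}Z\bigr)^+,
\]
where we used the identity $\nabla^-_XY - \nabla^+_YX = [X,Y]$, immediate from $\nabla^\pm=\nabla^g\pm\tfrac12 H$. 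Subtracting and rewriting $B^+_X\nabla^+_YZ - \nabla^+_YB^+_XZ + B^+_{\nabla^+_YX}Z = -(\nabla^+_YB^+)_XZ$, the pure Bismut commutator $\nabla^+_X\nabla^+_YZ - \nabla^+_Y\nabla^+_XZ - \nabla^+_{[X,Y]}Z = R^+(X,Y)Z$ produces the claimed expression.

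The remaining four formulas follow by the same three-step template: (1) unfold each composite $D\cdot D$ using the appropriate clauses of Proposition~\ref{prop:connection-tensors}; (2) decompose $[a,b]$ via Lemma~\ref{lem:brackets-in-regular-ca} into its $V_+$, $V_-$ and $\gfrak$ summands and apply $D$ once more; (3) collect Bismut commutators (producing $R^\pm$), $\nabla$-commutators on $\gfrak$ (producing $R^\nabla$, and hence algebraic $F$-terms via the identity $R^\nabla(X,Y)r=[F(X,Y),r]$), cross terms between a connection derivative and a tensor (producing the covariant derivatives $(\nabla^\pm B^\pm),(\nabla^-A),(\nabla^-N),(\nabla^-W),(\nabla^-C),(\nabla L)$), and purely algebraic combinations of $F,A,B^\pm,C,W,L,N$. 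The non-tensorial derivative pieces must and do cancel. The main obstacle is purely organizational rather than conceptual: because $V_-$ couples $TM$ and $\gfrak$, clauses (iv), (vi), (vii) of Proposition~\ref{prop:connection-tensors} simultaneously activate all six tensors, producing several dozen intermediate terms per case, each carrying its own $\pm$ sign and possibly a $\tfrac12$ from clauses (i), (iii), (v). The calculations are routine but long, and are best relegated to the appendix, where the five cases can be carried out in parallel following the template above.
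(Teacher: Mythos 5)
Your proposal follows exactly the paper's own route: a term-by-term expansion of $\Rcal_0(a,b)c=D_aD_bc-D_bD_ac-D_{[a,b]}c$ using the clauses of Proposition \ref{prop:connection-tensors} and the bracket formulas of Lemma \ref{lem:brackets-in-regular-ca}, and your worked case $\Rcal^+_D(X^+,Y^-)Z^+$ reproduces the paper's computation in Appendix \ref{app:computations} essentially verbatim, including the identity $\nabla^-_XY-\nabla^+_YX=[X,Y]$ and the regrouping into $R^+$ and $-(\nabla^+_YB^+)_XZ$. The remaining five cases (you say four, a miscount) are done in the appendix by precisely the template you describe — the only small ingredient you leave implicit is the evaluation of $D$ on the 1-form components of the brackets via $\alpha=\tfrac{1}{2}(\alpha^+-\alpha^-)$, e.g.\ $D_\alpha Z^+=\tfrac{1}{2}(B^+_{g^{-1}\alpha}Z)^+$ — so the proposal is correct and takes essentially the same approach as the paper.
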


\bigskip
\section{The Bismut family} \label{sect:canonical-fam}

Although there are many generalized (Levi-Civita) connections on general Courant algebroids, in the case of exact and transitive Courant algebroids (in standard form) there are some which involve no choices and hence deserve the name canonical, e.g., the canonical Levi-Civita connection of \cite{garcia-fernandez-rubio-tipler} and the Gualtieri--Bismut connection of \cite{gualtieri-branes}. We focus now on a family that interpolates between these two.

\begin{definition} \label{def:bismut-family}
The \textbf{Bismut family of generalized connections} is the family of metric generalized connections with pure-type torsion on $\Tbb M\oplus\gfrak$ given by the tensors (cfr. Proposition \ref{prop:connection-tensors})
\[
A=C=N=0,\qquad W=-F,\qquad B^\pm=\lambda^\pm H,\qquad L=\mu c,
\]
for some constants $\lambda^\pm,\mu\in\R$. Connections in the Bismut family will be called \textbf{generalized Bismut connections}.
\end{definition}

Notice that, by Proposition \ref{prop:connection-tensors}, of all the generalized Bismut connections, only the one with $\lambda^\pm=\mp\mu=\mp\frac{1}{3}$ is Levi-Civita. This connection has been called the \textbf{canonical Levi-Civita connection} of $(E,\Gcal)$ in \cite{garcia-fernandez-rubio-tipler}. In the case of an exact Courant algebroid, the Bismut family is a plane of generalized connections interpolating between the Gualtieri--Bismut connection \cite{gualtieri-branes}, for $\lambda^\pm=0$, and the canonical Levi-Civita connection, for $\lambda^\pm=\mp\frac{1}{3}$. Back to transitive Courant algebroids, we call the case $\lambda^\pm=\mu=0$ the Gualtieri--Bismut connection. Its torsion is given by
\begin{align*}
\mathcal{T}(X^\pm,Y^\pm,Z^\pm)&=H(X,Y,Z),\\
\mathcal{T}(r,s,t)&=c(r,s,t),
\end{align*}
and zero otherwise.

\begin{proposition} \label{prop:curvatures-generalized-Bismut}
The generalized curvatures $\Rcal^\pm_D$ of a generalized Bismut connection are given by
\begin{align}
\Rcal^+_D(X^+,Y^-)Z^+ &= \left( R^+(X,Y)Z-\lambda^+ \nabla^+_YH(X,Z) -\frac{1}{2} F_{F(X,Y)}Z \right)^+,  \label{eq:curvaturep-XYZ} \\
\Rcal^+_D(X^+,r)Z^+ &= \frac{1}{2}\left( (\nabla^+_XF)_r Z+\lambda^+ H(X,F_r Z)-\lambda^+ F_r (H(X,Z))\right. \label{eq:curvaturep-XxiZ} \\
&\qquad\qquad \left. +\lambda^+H(F_r X,Z) \right)^+,  \nonumber \\
\Rcal^-_D(X^-,Y^+)Z^- &= \left(R^-(X,Y)Z-\lambda^- \nabla^-_YH(X,Z) +F_{F(X,Y)}Z \right)^-  \label{eq:curvaturem-XYZ} \\
&\qquad +\nabla^-_XF(Y,Z)-\lambda^- F(Y,H(X,Z))-F(H(X,Y),Z), \nonumber \\
\Rcal^-_D(X^-,Y^+)r &=\left(-\frac{1}{2}(\nabla^-_XF)_r Y-\frac{\lambda^-}{2}H(X,F_r Y)+\frac{1}{2}F_r(H(X,Y)) \right)^-  \label{eq:curvaturem-XYxi} \\
&\qquad +(\mu+1)[F(X,Y),r], \nonumber \\
\Rcal^-_D(r, Y^+)Z^- &=\left((\nabla^-_YF)_r Z-\frac{\lambda^-}{2}H(F_r Y,Z) \right)^- \label{eq:curvaturem-xiYZ}\\
&\qquad\qquad +\mu[r, F(Y,Z)]+F(Y,F_r Z)-\frac{1}{2}F(F_r Y,Z), \nonumber \\
\Rcal^-_D(r,Y^+)s &=\left( \frac{1}{2}F_r F_s Y+\frac{\mu}{2}F_{[r,s]}Y+\frac{1}{4}F_s F_r Y \right)^-  \label{eq:curvaturem-xiYeta}
\end{align}
\end{proposition}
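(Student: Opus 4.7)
The plan is to substitute the tensor data defining the Bismut family, namely $A = C = N = 0$, $W = -F$, $B^\pm = \lambda^\pm H$, and $L = \mu c$, into the general expressions for the generalized curvatures already obtained in Proposition \ref{prop:curvatures-regular-CA}, and then simplify using the structural data of the transitive Courant algebroid.

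Concretely, for each of the six curvature components I would proceed uniformly. First, discard every term containing $A$, $C$, or $N$. Second, replace $B^\pm$ and $\nabla^\pm B^\pm$ by $\lambda^\pm H$ and $\lambda^\pm \nabla^\pm H$ respectively, replace $W$ and $\nabla^- W$ by $-F$ and $-\nabla^- F$, and replace $L$ by $\mu c$. Third, rewrite any surviving Lie-algebraic contributions via the two identities $R^\nabla(X, Y)r = [F(X, Y), r]$ from Proposition \ref{prop:transitive-cas-normal-form} and $L_r s = \mu [r, s]$, the latter coming from $c(r,s,t) = \langle [r,s], t\rangle$ together with convention \eqref{eq:the-three-lives-of-B}. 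Two combinations deserve a brief remark: in the formula for $\Rcal^-_D(X^-, Y^+) r$, the two $\gfrak$-valued contributions $R^\nabla(X, Y)r$ and $L_{F(X,Y)} r$ add to $(\mu + 1)[F(X, Y), r]$; and in the formula for $\Rcal^-_D(r, Y^+) s$, the term $(\nabla_Y L)_r s = \mu (\nabla_Y c)_r s$ vanishes because $\nabla$ is a metric connection making $[\cdot, \cdot]$ parallel (Proposition \ref{prop:transitive-cas-normal-form}), whence $\nabla c = 0$. This last observation is precisely what forces the $\gfrak$-valued component of $\Rcal^-_D(r, Y^+) s$ to disappear, matching \eqref{eq:curvaturem-xiYeta}.

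There is no genuine obstacle in this computation; the main care required is of a bookkeeping nature, ensuring that the three tensorial viewpoints $B(a_1, a_2, a_3)$, $B_{a_1} a_2$, and $B(a_2, a_3)$ of convention \eqref{eq:the-three-lives-of-B} are consistently translated between $H$ seen as a 3-form or as an $\so(TM)$-valued 1-form via $g$, and between $c$ and the bracket $[\cdot, \cdot]$ via the pairing on $\gfrak$. Given this, the proposition is a clean corollary of Proposition \ref{prop:curvatures-regular-CA} and the definition of the Bismut family, and the remaining routine substitutions are the sort of calculation one naturally relegates to an appendix.
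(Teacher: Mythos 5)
Your proposal is correct and follows exactly the paper's argument: substitute the Bismut-family tensors into Proposition \ref{prop:curvatures-regular-CA}, use $R^\nabla(X,Y)r=[F(X,Y),r]$ and $L_rs=\mu[r,s]$, and invoke $\nabla c=0$ to kill the $(\nabla_YL)_rs$ term in \eqref{eq:curvaturem-xiYeta}. Nothing further is needed.
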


\begin{proof}
Substitute in Proposition \ref{prop:curvatures-regular-CA} the tensors for a generalized Bismut connection. For \eqref{eq:curvaturem-xiYeta}, take into account that $\nabla c=0$, since $\nabla$ preserves $\prodesc{\cdot}{\cdot}$ and $[\cdot,\cdot]$
\end{proof}

We finally address the question of flat connections on $\Tbb M\oplus\gfrak$. More concretely, the question is: What tuples $(M,g,H,\gfrak,\nabla,F)$ admit a flat generalized Bismut connection?

The answer (Theorem \ref{thm:flat-canonical-all}) will be intimately related to the Bismut geometry of $(M,g,H)$. For this reason, Lemma \ref{lem:bismut-vs-riemann-curvature} below, relating Bismut and Riemannian geometry, will be used repeatedly.

Before that, let us introduce some more helpful notation.

\begin{definition}
If $V$ is a vector space (or bundle), we define the \textbf{skew-symmetrization map} $\Alt_{12}:V^*\otimes V^*\to\wedge^2V^*$ by
\[
\Alt_{12}(B)(v_1,v_2):=\frac{1}{2}(B(v_1,v_2)-B(v_2,v_1)),
\]
and we also denote by $\Alt_{12}$ its extension to higher order tensors.
\end{definition}

\begin{definition}
We define the squares of $H$ and of $F$ as the forms $H^2$ and $F^2$ given by
\begin{align*}
H^2(X,Y,Z,W) &:=g(H(X,Y),H(Z,W)), \\
F^2(X,Y,Z,W) &:=\prodesc{F(X,Y)}{F(Z,W)},
\end{align*}
for $X,Y,Z,W\in TM$.
\end{definition}

Notice that both $H^2$ and $F^2$ lie in $\Gamma(S^2\wedge^2T^*M)$. It is a straightforward computation to check that $\langle F\wedge F\rangle=2\Bi(F^2)$.

\begin{lemma} \label{lem:bismut-vs-riemann-curvature}
For a triple $(M,g,H)$, the $\nabla^\pm$-derivative of $H$ and the Bismut curvatures $R^\pm$ are given by
\begin{enumerate}[label=\upshape{(\roman*)}]
    \item $\nabla^\pm H=\nabla^gH\mp\frac{1}{2}\Bi(H^2),$ \label{lem:bismut-vs-riemann-curvature:nablapmH}
    \item $R^\pm=R^g\pm\Alt_{12}(\nabla^g H)+\frac{1}{4}(H^2-\Bi(H^2)).$ \label{lem:bismut-vs-riemann-curvature:Rpm-vs-Rg}
\end{enumerate}
In particular,
\begin{enumerate}[resume,label=\upshape{(\roman*)}]
    \item $g(R^+(X,Y)Z,W)=g(R^-(Z,W)X,Y)+\frac{1}{2}dH(X,Y,Z,W).$ \label{lem:bismut-vs-riemann-curvature:RBismut-sym}
\end{enumerate}
\end{lemma}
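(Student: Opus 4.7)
The plan is to prove the three items in order; each is a direct calculation based on the identity $\nabla^{\pm} = \nabla^{g} \pm \tfrac{1}{2} H$, where $H$ is viewed as an element of $\Omega^{2}(M, TM)$ via $g(H(X,Y), Z) = H(X,Y,Z)$.

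For \textbf{(i)}, expand $(\nabla^{\pm}_X H)(Y,Z,W)$ using the Leibniz rule for the 3-form $H$ and subtract the analogous expansion for $\nabla^{g}$. Since the two connections agree on functions, only the three contorsion substitutions survive:
\[
(\nabla^{\pm}_X H - \nabla^{g}_X H)(Y,Z,W) = \mp \tfrac{1}{2}\bigl[H(H(X,Y),Z,W) + H(Y,H(X,Z),W) + H(Y,Z,H(X,W))\bigr].
\]
Each term has the shape $g(H(\cdot,\cdot), H(\cdot,\cdot))$: move the inner $H(X,\cdot)$ into the last slot of the outer $H$ by cyclic invariance of the 3-form, and read off $H^{2}$. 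The three terms then assemble precisely into $\Bi(H^{2})(X,Y,Z,W)$.

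For \textbf{(ii)}, apply the standard formula for the curvature of a connection perturbed by a $(1,2)$-tensor. With $A^{\pm}_X := \pm \tfrac{1}{2} H_X$, the torsion-freeness of $\nabla^{g}$ eliminates the mixed and $A^{\pm}_{[X,Y]}$ contributions, giving
\[
R^{\pm}(X,Y) - R^{g}(X,Y) = (\nabla^{g}_X A^{\pm})_Y - (\nabla^{g}_Y A^{\pm})_X + [A^{\pm}_X, A^{\pm}_Y].
\]
Pairing with $W$ via $g$, the linear-in-$H$ terms become exactly $\pm \Alt_{12}(\nabla^{g} H)(X,Y,Z,W)$ by definition of $\Alt_{12}$. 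For the commutator, expand
\[
g\bigl([H_X, H_Y]Z, W\bigr) = g(H(X, H(Y,Z)), W) - g(H(Y, H(X,Z)), W),
\]
and use cyclic invariance of the 3-form $H$ to convert each term into an $H^{2}$-pairing; the skewness properties of $H^{2}$ then identify the result with $(H^{2} - \Bi(H^{2}))(X,Y,Z,W)$.

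For \textbf{(iii)}, subtract $g(R^{-}(Z,W)X, Y)$ from $g(R^{+}(X,Y)Z, W)$ using (ii). The Riemannian piece cancels by the pair-swap symmetry $g(R^{g}(X,Y)Z, W) = g(R^{g}(Z,W)X, Y)$. The $H^{2}$-piece cancels because $H^{2}(X,Y,Z,W) = H^{2}(Z,W,X,Y)$, and the $\Bi(H^{2})$-piece cancels by the analogous pair-swap symmetry $\Bi(H^{2})(X,Y,Z,W) = \Bi(H^{2})(Z,W,X,Y)$, which a short expansion into $g(H(\cdot,\cdot), H(\cdot,\cdot))$-pairs using the skewness of $H$ confirms. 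The two $\Alt_{12}(\nabla^{g} H)$ contributions combine (the $\mp$ from (ii) aligns with the minus from the subtraction) into
\[
\Alt_{12}(\nabla^{g} H)(X,Y,Z,W) + \Alt_{12}(\nabla^{g} H)(Z,W,X,Y) = \tfrac{1}{2} dH(X,Y,Z,W),
\]
the equality being immediate from the Koszul-type formula $dH(X_0,\dots,X_3) = \sum_{i}(-1)^{i}(\nabla^{g}_{X_i} H)(X_0, \dots, \widehat{X_i}, \dots, X_3)$ valid for any torsion-free connection.

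The main obstacle is bookkeeping rather than anything conceptual: the two roles of $H$ (as a 3-form and as a $(1,2)$-tensor), together with the several skew-, cyclic-, and pair-swap symmetries of $H^{2}$ and $\Bi(H^{2})$, must be tracked with care. Once these are settled, each identity reduces to a short algebraic rearrangement.
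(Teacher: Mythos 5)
Your proposal is correct and follows essentially the same route as the paper: expand $\nabla^\pm=\nabla^g\pm\tfrac12 H$ via the Leibniz rule to get (i), compute the curvature difference (linear terms giving $\pm\Alt_{12}(\nabla^gH)$, quadratic terms giving $\tfrac14(H^2-\Bi(H^2))$) for (ii), and use the pair-swap symmetries of $R^g$, $H^2$, $\Bi(H^2)$ together with $dH$ being the alternation of $\nabla^gH$ for (iii). The only cosmetic difference is that you package (ii) as the standard perturbed-connection curvature formula, which is exactly the computation the paper carries out by hand.
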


\begin{proof}
For \ref{lem:bismut-vs-riemann-curvature:nablapmH}, we compute
\begin{align*}
\nabla^\pm_XH(Y,Z) &=\nabla^\pm_X(H(Y,Z))-H(\nabla^\pm_XY,Z)-H(Y,\nabla^\pm_XZ) \\
&=\nabla^g_XH(Y,Z)\pm\frac{1}{2}(H(X,H(Y,Z))-H(H(X,Y),Z)-H(Y,H(X,Z))).
\end{align*}

As for \ref{lem:bismut-vs-riemann-curvature:Rpm-vs-Rg}, since $[X,Y]=\nabla^g_XY-\nabla^g_YX$, we have that
\begin{align*}
R^\pm(X,Y)Z &= \nabla^\pm_X(\nabla^g_YZ\pm\frac{1}{2}H(Y,Z))-\nabla^\pm_Y(\nabla^g_XZ\pm\frac{1}{2}H(X,Z)) \\
&\qquad\qquad -\nabla^g_{[X,Y]}Z\mp\frac{1}{2}H([X,Y],Z) \\
&=R^g(X,Y)Z\pm\frac{1}{2}H(X,\nabla^g_YZ)\pm\frac{1}{2}\nabla^g_X(H(Y,Z))\\
&\qquad\qquad +\frac{1}{4}H(X,H(Y,Z))\mp\frac{1}{2}H(Y,\nabla^g_XZ)\mp\frac{1}{2}\nabla^g_Y(H(X,Z))\\
&\qquad\qquad -\frac{1}{4}H(Y,H(X,Z))\mp\frac{1}{2}H([X,Y],Z) \\
&=R^g(X,Y)Z\pm\frac{1}{2}(\nabla^g_XH(Y,Z)-\nabla^g_YH(X,Z))\\
&\qquad\qquad +\frac{1}{4}(H(X,H(Y,Z))+H(Y,H(Z,X))),
\end{align*}

Finally, since $R^g$, $H^2$ and $\Bi(H^2)$ are all sections of $S^2\wedge^2T^*M$, so that we have, for instance, that
\[
H^2(X,Y,Z,W)=H^2(Z,W,X,Y),
\]
then, by \ref{lem:bismut-vs-riemann-curvature:Rpm-vs-Rg},
\begin{align*}
g(R^+(X,Y)Z,W)-g(R^-(Z,W)X,Y) &=\frac{1}{2}\big( \nabla^g_XH(Y,Z,W)-\nabla^g_YH(X,Z,W) \\
&\qquad\qquad +\nabla^g_ZH(X,Y,W)-\nabla^g_WH(X,Y,Z) \big) \\
&=\frac{1}{2}dH(X,Y,Z,W). \qedhere
\end{align*}
\end{proof}

We can now formulate our first main result, which gives the conditions that a tuple $(M,g,H,\gfrak,\nabla,F)$ must satisfy in order to admit a flat generalized Bismut connection. These conditions, concerning mainly the curvatures of $\nabla^g$, $\nabla^\pm$ and $\nabla$ and the parallelism property of $H$, are what will allow us to deduce our main theorem (Theorem \ref{thm:flat-canonical-all}). For the sake of the exposition, we limit ourselves here to the Riemannian case. The remarkably more complicated result for pseudo-Riemannian metrics can be found in Appendix \ref{app:pseudo-riemannian}.

\begin{proposition} \label{prop:generalized-Riemann-flat}
Let $g$ be Riemannian. If $\Tbb M\oplus\gfrak$ admits a flat generalized Bismut connection $D$, then
\begin{enumerate}[label=\upshape{(\roman*)}]
    \item $F=0$, and, in particular, $\nabla$ is flat, \label{prop:generalized-Riemann-flat:F-zero}
    \item $\nabla^gH=0$, \label{prop:generalized-Riemann-flat:H-parallel:Riemannian}
    \item if $(\lambda^+,\lambda^-)\neq (-\frac{1}{3},\frac{1}{3})$, then $\Bi(H^2)=0$, \label{prop:generalized-Riemann-flat:BiH2:Riemannian}
    \item $R^\pm=-\frac{1}{6}\Bi(H^2)$, \label{prop:generalized-Riemann-flat:Rpm:Riemannian}
	\item $R^g=\frac{1}{12}\Bi(H^2)-\frac{1}{4}H^2$, \label{prop:generalized-Riemann-flat:Rg:Riemannian}
	\item $\sec^g(X,Y)=\displaystyle \frac{1}{4}\frac{\norm{H(X,Y)}_g^2}{\norm{X\wedge Y}_g^2}$, \label{prop:generalized-Riemann-flat:sec:Riemannian}
	\item $\scal^g=\frac{1}{4}\norm{H}_g^2$, which is locally constant. \label{prop:generalized-Riemann-flat:scal:Riemannian}
\end{enumerate}

In particular, if $(\lambda^+,\lambda^-)\neq (-\frac{1}{3},\frac{1}{3})$, which implies that $D$ is not generalized Levi-Civita, then $R^\pm=0$ and $R^g=-\frac{1}{4}H^2$.
\end{proposition}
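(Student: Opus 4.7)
The plan is to exploit the explicit generalized curvatures of Proposition \ref{prop:curvatures-generalized-Bismut} together with the Bismut-to-Riemannian dictionary of Lemma \ref{lem:bismut-vs-riemann-curvature}. I would begin with item (i). The cleanest algebraic constraint is \eqref{eq:curvaturem-xiYeta}: its vanishing reads $\tfrac{1}{2}F_rF_sY + \tfrac{\mu}{2}F_{[r,s]}Y + \tfrac{1}{4}F_sF_rY = 0$. Symmetrizing in $r,s$ kills the bracket term and yields $F_rF_s + F_sF_r = 0$; setting $r=s$ gives $F_r^2 = 0$. In the Riemannian case each $F_r$ is skew-adjoint for a positive-definite metric, so $\|F_rv\|_g^2 = -g(F_r^2 v, v) = 0$, forcing $F=0$. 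Proposition \ref{prop:transitive-cas-normal-form} then gives $R^\nabla = 0$ and $dH = \langle F \wedge F\rangle = 0$. Substituting $F=0$ into Proposition \ref{prop:curvatures-generalized-Bismut}, the equations \eqref{eq:curvaturep-XxiZ}, \eqref{eq:curvaturem-XYxi} and \eqref{eq:curvaturem-xiYZ} become trivial, and the remaining flatness content condenses into the $(0,4)$-tensor identity
\[
R^\pm(X,Y)Z \;=\; \lambda^\pm (\nabla^\pm_Y H)(X,Z), \qquad (\star)
\]
after pairing with $g$.

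For (ii), the idea is to exploit the symmetries of $R^\pm$ applied to $(\star)$. Assume first that at least one of $\lambda^\pm$ is nonzero, say $\lambda^+$. The antisymmetry of $R^+(X,Y)$ in $(X,Y)$ then forces $\nabla^+ H$, viewed as a $(0,4)$-tensor with slot $1$ the derivative direction, to be antisymmetric in slots $1$--$2$; combined with the automatic skew-symmetry in slots $2$--$3$--$4$ (since $H$ is a $3$-form), $\nabla^+ H$ is totally antisymmetric. A short symmetry check gives $\Bi(H^2) \in \Omega^4(M)$, so Lemma \ref{lem:bismut-vs-riemann-curvature}(i) yields $\nabla^g H \in \Omega^4(M)$; as $dH$ is proportional to the total antisymmetrization of $\nabla^g H$ and $dH = 0$, I conclude $\nabla^g H = 0$. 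In the degenerate case $\lambda^+ = \lambda^- = 0$, $(\star)$ gives $R^\pm = 0$; subtracting the two identities of Lemma \ref{lem:bismut-vs-riemann-curvature}(ii) produces $\Alt_{12}(\nabla^g H) = 0$, i.e.\ symmetry in slots $1$--$2$. Combining this with antisymmetry in slots $2$--$3$--$4$, a brief cycling through both symmetries returns $\nabla^g H = -\nabla^g H$, again giving $\nabla^g H = 0$.

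With $\nabla^g H = 0$, Lemma \ref{lem:bismut-vs-riemann-curvature}(i) simplifies $(\star)$ to $R^+ = \tfrac{\lambda^+}{2}\Bi(H^2)$ and $R^- = -\tfrac{\lambda^-}{2}\Bi(H^2)$ (after using the slot-$(1,2)$ antisymmetry of $\Bi(H^2)$). To get (iii), I would cyclically sum in $(X,Y,Z)$. Directly, $\sigma_{X,Y,Z}R^\pm = \pm\tfrac{3\lambda^\pm}{2}\Bi(H^2)$, since $\Bi(H^2) \in \Omega^4$ is invariant under the even $3$-cycle. On the other hand, the first Bianchi identity $\sigma R^g = 0$ combined with Lemma \ref{lem:bismut-vs-riemann-curvature}(ii) and $\nabla^g H = 0$ gives $\sigma R^\pm = \tfrac{1}{4}(\sigma H^2 - \sigma\Bi(H^2)) = -\tfrac{1}{2}\Bi(H^2)$, using $\sigma H^2 = \Bi(H^2)$ by definition and $\sigma\Bi(H^2) = 3\Bi(H^2)$. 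Equating forces $(3\lambda^++1)\Bi(H^2) = 0$ and $(3\lambda^--1)\Bi(H^2) = 0$, so $\Bi(H^2) = 0$ unless $(\lambda^+, \lambda^-) = (-\tfrac{1}{3}, \tfrac{1}{3})$, which is (iii). Item (iv) is then immediate in either case; Lemma \ref{lem:bismut-vs-riemann-curvature}(ii) with $\nabla^g H = 0$ and (iv) gives (v). For (vi) and (vii), evaluate $g(R^g(X,Y)Y, X)$ from (v): the $\Bi(H^2)$ term vanishes by total antisymmetry and $H^2(X,Y,Y,X) = -\|H(X,Y)\|_g^2$, so $\sec^g(X,Y) = \tfrac{1}{4}\|H(X,Y)\|_g^2/\|X \wedge Y\|_g^2$. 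Tracing over an orthonormal basis produces $\scal^g$, and local constancy follows from $\nabla^g H = 0$ together with $\nabla^g g = 0$.

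The main obstacle will be step (ii): organizing the antisymmetry constraints coming from $(\star)$ to pin down $\nabla^g H$, particularly in the degenerate case $\lambda^+ = \lambda^- = 0$, where $(\star)$ is vacuous and one must argue through Lemma \ref{lem:bismut-vs-riemann-curvature}(ii) instead.
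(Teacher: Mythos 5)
Your proposal is correct and follows essentially the same route as the paper's proof: $F=0$ extracted from the $(r,Y^+,s)$ curvature component via (anti)symmetrization in $r,s$ and positive-definiteness, the identity relating $R^\pm$ to $\lambda^\pm\nabla^\pm H$, skew-symmetry forcing $\nabla^gH=0$ (with the degenerate case $\lambda^\pm=0$ handled through $\Alt_{12}(\nabla^gH)=0$), and a first-Bianchi argument producing $(1+3\lambda^+)\Bi(H^2)=(1-3\lambda^-)\Bi(H^2)=0$, from which (iii)--(vii) follow as in the paper. The only cosmetic difference is that you obtain the Bianchi constraint from $\sigma R^g=0$ combined with Lemma \ref{lem:bismut-vs-riemann-curvature}\ref{lem:bismut-vs-riemann-curvature:Rpm-vs-Rg}, whereas the paper quotes the skew-torsion Bianchi identity $\Bi(R^\pm)=\Bi(H^2)\pm\Bi(\nabla^\pm H)$; these are equivalent here.
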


\begin{proof}
First of all, from \eqref{eq:curvaturem-xiYeta} we have that
\[
\mu F_{[r,s]}=-F_r F_s-\frac{1}{2}F_s F_r.
\]
Since the left-hand side is skew-symmetric in $r$ and $s$, then the right-hand side must be so as well. Therefore,
\[
-F_r F_s-\frac{1}{2}F_s F_r=F_s F_r+\frac{1}{2}F_r F_s,
\]
which leads to $F_r F_s=-F_s F_r$. This in turn implies that
\[
\mu F_{[r,s]}=-\frac{1}{2}F_r F_s.
\]
In particular, $F_r F_r=0$, which gives that, since $F_r$ is skew-symmetric,
\[
g(F_r X,F_r X)=-g(F_rF_rX,X)=0.
\]
Since $g$ is Riemannian, this implies that $F=0$. Since the curvature of $\nabla$ is given by $R^\nabla(X,Y)=[F(X,Y),\cdot]$, then $\nabla$ is flat, settling \ref{prop:generalized-Riemann-flat:F-zero}.

Consider now \eqref{eq:curvaturep-XYZ} and \eqref{eq:curvaturem-XYZ}. We have that, since $F=0$,
\begin{equation}\label{eq:nablapm-Rpm:Riemannian}
\lambda^\pm\nabla^\pm_XH(Y,Z)=-R^\pm(X,Y)Z.
\end{equation}
We can rewrite this as
\begin{equation}\label{eq:nablapm-Rpm-noXYZ:Riemannian}
R^\pm=-\lambda^\pm\nabla^\pm H.
\end{equation}
The first easy consequence of \eqref{eq:nablapm-Rpm:Riemannian} is that, since the right-hand side is skew-symmetric in $X$ and $Y$, then the left-hand side must be so as well. If $\lambda^+\neq 0$ or $\lambda^-\neq 0$, then this means that either $\nabla^+H$ or $\nabla^-H$ actually lies in $\Omega^4(M)$, and Lemma \ref{lem:bismut-vs-riemann-curvature}\ref{lem:bismut-vs-riemann-curvature:nablapmH} then implies that $\nabla^gH\in\Omega^4(M)$ as well, since $\Bi(H^2)\in\Omega^4(M)$. But then, because the de Rham differential can be expressed as the skew-symmetrization of $\nabla^g$, we get that $\nabla^gH=\frac{1}{4}dH=\frac{1}{2}\Bi(F^2)=0$, proving \ref{prop:generalized-Riemann-flat:H-parallel:Riemannian}. We conclude from \eqref{eq:nablapm-Rpm-noXYZ:Riemannian} and Lemma \ref{lem:bismut-vs-riemann-curvature}\ref{lem:bismut-vs-riemann-curvature:nablapmH} that
\begin{equation} \label{eq:Rpm:Riemannian}
R^\pm = -\lambda^\pm\nabla^\pm H=\pm\frac{\lambda^\pm}{2}\Bi(H^2).
\end{equation}

The (first) Bianchi identity for $R^\pm$ in general states that
\begin{equation} \label{eq:bianchi-identity-Rpm:Riemannian}
\Bi(R^\pm)=\Bi(H^2)\pm\Bi(\nabla^\pm H).
\end{equation}
Using \eqref{eq:Rpm:Riemannian}, Lemma \ref{lem:bismut-vs-riemann-curvature}\ref{lem:bismut-vs-riemann-curvature:nablapmH} and the fact that $\nabla^gH=0$, we now have that 
\[
\pm\frac{3}{2}\lambda^\pm\Bi(H^2)=\Bi(R^\pm)=\Bi(H^2)\pm\Bi(\nabla^\pm H)=\Bi(H^2)-\frac{3}{2}\Bi(H^2).
\]
This gives the equations
\begin{align*}
(1+3\lambda^+)\Bi(H^2) &=0, \\
(1-3\lambda^-)\Bi(H^2) &=0.
\end{align*}
If $(\lambda^+,\lambda^-)\neq (-\frac{1}{3},\frac{1}{3})$, then one of these equations will give that $\Bi(H^2)=0$, settling \ref{prop:generalized-Riemann-flat:BiH2:Riemannian}. Notice that, by \eqref{eq:Rpm:Riemannian}, this also implies \ref{prop:generalized-Riemann-flat:Rpm:Riemannian}. From Lemma \ref{lem:bismut-vs-riemann-curvature}\ref{lem:bismut-vs-riemann-curvature:Rpm-vs-Rg} it now follows that $R^g=\frac{1}{12}\Bi(H^2)-\frac{1}{4}H^2$, giving \ref{prop:generalized-Riemann-flat:Rg:Riemannian}.

In the case that $\lambda^\pm=0$, we do not know $\nabla^gH$ to be a 4-form and the previous argument does not apply. In this case, though, $R^\pm=0$ directly holds, by \eqref{eq:nablapm-Rpm-noXYZ:Riemannian}. The Bianchi identity \eqref{eq:bianchi-identity-Rpm:Riemannian} gives
\[
0=\Bi(H^2)\pm \Bi(\nabla^\pm H).
\]
Using Lemma \ref{lem:bismut-vs-riemann-curvature}\ref{lem:bismut-vs-riemann-curvature:nablapmH}, we obtain
\[
0=\Bi(H^2)\pm\Bi(\nabla^gH)-\frac{1}{2}\Bi(\Bi(H^2))=\pm\Bi(\nabla^gH)-\frac{1}{2}\Bi(H^2).
\]
This implies that $\Bi(\nabla^gH)=\Bi(H^2)=0$, proving \ref{prop:generalized-Riemann-flat:BiH2:Riemannian} and \ref{prop:generalized-Riemann-flat:Rpm:Riemannian} in this case. Finally, Lemma \ref{lem:bismut-vs-riemann-curvature}\ref{lem:bismut-vs-riemann-curvature:Rpm-vs-Rg} gives
\[
0=R^\pm=R^g\pm\Alt_{12}(\nabla^gH)+\frac{1}{4}(H^2-\Bi(H^2))=R^g\pm\Alt_{12}(\nabla^gH)+\frac{1}{4}H^2.
\]
This implies that $\Alt_{12}(\nabla^gH)=0$, proving \ref{prop:generalized-Riemann-flat:Rg:Riemannian}. Together with $\Bi(\nabla^gH)=0$, it also implies that
\begin{align*}
\nabla^g_XH(Y,Z)&=\nabla^g_XH(Y,Z) +2\Alt_{12}(\nabla^gH)(Y,Z,X)\\
&=\nabla^g_XH(Y,Z)+\nabla^g_YH(Z,X)-\nabla^g_ZH(Y,X) \\
&=\Bi(\nabla^gH)(X,Y,Z)=0,
\end{align*}
finally giving \ref{prop:generalized-Riemann-flat:H-parallel:Riemannian} in this case as well.

Lastly, we turn to the results on sectional and scalar curvature. Since $\Bi(H^2)\in\Omega^4(M)$, we have that
\[
\prodesc{R^g(X,Y)Y}{X} = -\frac{1}{4}H^2(X,Y,Y,X)=\frac{1}{4}\norm{H(X,Y)}_g^2.
\]
This gives \ref{prop:generalized-Riemann-flat:sec:Riemannian}.

Also, if $\{E_j\}_j$ is a local orthonormal frame for $g$, then
\[
\scal^g=\sum_{j,k}\sec^g(E_j,E_k)=\frac{1}{4}\sum_{j,k}\norm{H(E_j,E_k)}_g^2=\frac{1}{4}\norm{H}_g^2.
\]
Since $\nabla^gH=0$, we also have that $d\norm{H}_g^2=0$, finally proving \ref{prop:generalized-Riemann-flat:scal:Riemannian}.
\end{proof}

Some easy consequences readily follow.

\begin{corollary} \label{cor:locally-symmetric-F-zero}
Let $g$ be Riemannian. If $\Tbb M\oplus\gfrak$ admits a flat generalized Bismut connection, then $(M,g)$ is locally symmetric.
\end{corollary}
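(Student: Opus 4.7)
The plan is to directly combine items \ref{prop:generalized-Riemann-flat:H-parallel:Riemannian} and \ref{prop:generalized-Riemann-flat:Rg:Riemannian} of Proposition \ref{prop:generalized-Riemann-flat}. Recall that a Riemannian manifold is locally symmetric precisely when $\nabla^g R^g=0$, so the whole task reduces to differentiating the formula
\[
R^g=\frac{1}{12}\Bi(H^2)-\frac{1}{4}H^2
\]
with the Levi-Civita connection and checking that the right-hand side is parallel.

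The key observation is that $H^2$ is obtained from $H$ by contracting with the metric $g$ (in the Definition $H^2(X,Y,Z,W)=g(H(X,Y),H(Z,W))$), and the Levi-Civita connection satisfies $\nabla^g g=0$. Hence $\nabla^g H^2$ is a polynomial expression in $\nabla^g H$ (and $H$, $g$), and by Proposition \ref{prop:generalized-Riemann-flat}\ref{prop:generalized-Riemann-flat:H-parallel:Riemannian} we have $\nabla^g H=0$, so $\nabla^g H^2=0$. Since $\Bi$ is a pointwise tensorial skew-symmetrization, it commutes with $\nabla^g$, yielding $\nabla^g \Bi(H^2)=\Bi(\nabla^g H^2)=0$.

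Combining these, $\nabla^g R^g=\frac{1}{12}\nabla^g\Bi(H^2)-\frac{1}{4}\nabla^g H^2=0$, so $(M,g)$ is locally symmetric.

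There is no serious obstacle here: the whole content of the corollary has already been absorbed in the proof of Proposition \ref{prop:generalized-Riemann-flat}, and one only needs to observe that the explicit expression of $R^g$ in terms of $H$ and $g$ makes it automatically $\nabla^g$-parallel once $H$ is.
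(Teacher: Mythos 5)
Your proof is correct and follows essentially the same route as the paper: both deduce $\nabla^g H^2=0$ from $\nabla^g H=0$ (Proposition \ref{prop:generalized-Riemann-flat}\ref{prop:generalized-Riemann-flat:H-parallel:Riemannian}), note that $\Bi$ commutes with $\nabla^g$, and conclude $\nabla^g R^g=0$ from the formula in Proposition \ref{prop:generalized-Riemann-flat}\ref{prop:generalized-Riemann-flat:Rg:Riemannian}. No gaps.
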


\begin{proof}
By Proposition \ref{prop:generalized-Riemann-flat}\ref{prop:generalized-Riemann-flat:Rg:Riemannian} we have that $R^g$ depends only on $\Bi(H^2)$ and $H^2$. A straightforward computation gives that
\[
\nabla^g_XH^2(Y,Z,V,W)=g(\nabla^g_XH(Y,Z), H(V,W))+g(H(Y,Z),\nabla^g_XH(V,W))
\]
and that $\nabla^g_X(\Bi(H^2))=\Bi(\nabla^g_XH^2)$. Since $\nabla^gH=0$ by Proposition \ref{prop:generalized-Riemann-flat}\ref{prop:generalized-Riemann-flat:H-parallel:Riemannian}, we conclude that $\nabla^gR^g=0$, proving that $(M,g)$ is locally symmetric.
\end{proof}

\begin{corollary} \label{cor:trivial-gfrak}
Let $g$ be Riemannian and let $M$ be 1-connected. If $\Tbb M\oplus\gfrak$ admits a flat generalized Bismut connection $D$, then $\gfrak$ is isomorphic to a trivial Lie algebra bundle $M\times \gfrak$ (where now $\gfrak$ is a fixed Lie algebra) and, via this isomorphism, the connection $D$ restricted to $\gfrak$ is given by
\begin{equation} \label{eq:gfrak-part}
D_{X^\pm}r=\Lcal_Xr,\qquad D_r X^\pm=0, \qquad D_rs=\mu[r,s].
\end{equation}
On the other hand, the restriction of $D$ to $\Tbb M$ gives a flat generalized Bismut connection on $\Tbb M$.
\end{corollary}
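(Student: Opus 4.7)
The plan is to leverage Proposition \ref{prop:generalized-Riemann-flat}\ref{prop:generalized-Riemann-flat:F-zero} as the crucial input: since $g$ is Riemannian and $D$ is flat, we already know $F=0$ and hence $\nabla$ is flat on $\gfrak$. With $M$ simply connected, a flat vector-bundle connection is trivializable by $\nabla$-parallel sections, so I would first fix a global parallel frame of $\gfrak$ to obtain a vector-bundle isomorphism $\gfrak \cong M\times\gfrak_0$ under which $\nabla$ becomes the trivial flat connection $d$, i.e.\ $\nabla_Xr=\Lcal_Xr$ for $r$ viewed as a $\gfrak_0$-valued function. The one point requiring mild care is that $\gfrak_0$ must inherit a Lie-algebra structure compatible with the pointwise bracket on $\gfrak$; this follows from Proposition \ref{prop:transitive-cas-normal-form}, which states that $\nabla$ makes $[\cdot,\cdot]$ parallel, so the bracket of two parallel sections is again parallel, and the trivialization is in fact a Lie-algebra bundle isomorphism onto the constant bundle with fiber $\gfrak_0$.

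Next, I would simply substitute the Bismut-family data $A=C=N=0$, $W=-F=0$, $B^\pm=\lambda^\pm H$, $L=\mu c$, together with $F=0$, into Proposition \ref{prop:connection-tensors}\ref{item:D-p-g}-\ref{item:D-g-g}. Items \ref{item:D-p-g} and \ref{item:D-m-g} collapse to $D_{X^\pm}r=\nabla_Xr$, which after the trivialization is $\Lcal_Xr$; items \ref{item:D-g-p} and \ref{item:D-g-m} collapse to $D_rX^\pm=0$; and item \ref{item:D-g-g} becomes $D_rs=L_rs=\mu c_rs=\mu[r,s]$, using the identification $c_rs=[r,s]$ that comes from convention \eqref{eq:the-three-lives-of-B} applied to $c=\prodesc{[\cdot,\cdot]}{\cdot}$. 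This gives \eqref{eq:gfrak-part} verbatim.

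For the last assertion, I would argue that with $F=0$ the bracket formula \eqref{eq:bracket} shows that $\Tbb M=TM\oplus T^*M\hookrightarrow \Tbb M\oplus\gfrak$ is a Courant subalgebroid (the cross-terms between $\Tbb M$ and $\gfrak$ in the bracket all involve $F$ or land in $\gfrak$ through $\nabla r$ and $[r,s]$, which vanish on $\Tbb M$-sections), and the generalized metric $\Gcal$ restricts to the standard one on $\Tbb M$ determined by $g$. By Proposition \ref{prop:connection-tensors}\ref{item:D-mp-pm}-\ref{item:D-pm-pm}, the restriction of $D$ to $\Tbb M$-sections is a metric generalized connection with pure-type torsion whose defining tensors are precisely $B^\pm=\lambda^\pm H$ and $A=0$, i.e.\ a generalized Bismut connection on $\Tbb M$ with the same parameters. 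Flatness is then automatic: the curvatures $\Rcal^\pm_D$ of the restriction are just the restrictions of the generalized curvatures of $D$ to $V_\pm\subseteq \Tbb M$ arguments, which vanish by hypothesis. The main (and only) non-routine step in the argument is the Lie-algebra-bundle trivialization, which as noted is handled cleanly by the parallelism of $[\cdot,\cdot]$.
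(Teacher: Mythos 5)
Your proposal is correct and follows essentially the same route as the paper: use Proposition \ref{prop:generalized-Riemann-flat}\ref{prop:generalized-Riemann-flat:F-zero} to get $F=0$ and flatness of $\nabla$, trivialize $\gfrak$ by parallel transport (with the bracket constant because $\nabla$ preserves it), read off \eqref{eq:gfrak-part} from Proposition \ref{prop:connection-tensors}, and observe that with $F=A=0$ the restriction to $\Tbb M$ is a generalized Bismut connection whose curvatures vanish by Proposition \ref{prop:curvatures-generalized-Bismut}. Your explicit remarks on the parallelism of $[\cdot,\cdot]$ and on $\Tbb M$ being a Courant subalgebroid are just welcome elaborations of steps the paper leaves implicit.
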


\begin{proof}
By Proposition \ref{prop:generalized-Riemann-flat}\ref{prop:generalized-Riemann-flat:F-zero} we have that $\nabla$ is flat. If $M$ is 1-connected, then we can trivialize the bundle $\gfrak$ via parallel transport, in which case $\nabla$ becomes just the differential on $\Cinf(M,\gfrak)$, where now $\gfrak$ is a fixed Lie algebra. The expressions \eqref{eq:gfrak-part} now follow from Proposition \ref{prop:connection-tensors}, using that $F=0$. Because $F=0$ and $A=0$, it follows from Propositions \ref{prop:connection-tensors} and \ref{prop:curvatures-generalized-Bismut} that $D$ restricted to $\Tbb M$ defines a flat generalized Bismut connection on $\Tbb M$.
\end{proof}

Because of Corollary \ref{cor:trivial-gfrak}, in the rest of this section we will focus on flat generalized Bismut connections on $\Tbb M$ in the case that $(M,g)$ is Riemannian, since any flat generalized Bismut connection on $\Tbb M\oplus\gfrak$ is a trivial extension of such a generalized connection on $\Tbb M$ via \eqref{eq:gfrak-part}.

We will answer the following question: Which Riemannian manifolds $(M,g)$ can appear as the base of an exact Courant algebroid admitting a flat generalized Bismut connection?

To study this classification problem, we would like to restrict to ``irreducible'' objects, but at first it is not clear what the correct notion of irreducibility should be. For our purposes, the classical notion of $\nabla^g$-irreducibility will suffice. 

\begin{proposition}
If $(M,g)=(M_1,g_1)\times (M_2,g_2)$ is a Riemannian product such that $\Tbb M$ admits a flat generalized Bismut connection, then $H=H_1+H_2$ with $H_i\in\Omega^3(M_i)$.
\end{proposition}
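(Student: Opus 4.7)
The plan is to combine the sectional curvature identity of Proposition~\ref{prop:generalized-Riemann-flat}\ref{prop:generalized-Riemann-flat:sec:Riemannian} with the parallelism $\nabla^gH=0$ of Proposition~\ref{prop:generalized-Riemann-flat}\ref{prop:generalized-Riemann-flat:H-parallel:Riemannian}: first to kill the ``mixed'' components of $H$ with respect to the product splitting $TM = TM_1 \oplus TM_2$, and then to descend each of the remaining pure components to its corresponding factor.

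First I would observe that, on a Riemannian product, every $2$-plane spanned by $X\in TM_1$ and $Y\in TM_2$ has $\sec^g(X,Y)=0$. By Proposition~\ref{prop:generalized-Riemann-flat}\ref{prop:generalized-Riemann-flat:sec:Riemannian} this forces $\norm{H(X,Y)}_g^2=0$; since $g$ is Riemannian, $H(X,Y)=0$ as a vector field and consequently $H(X,Y,Z)=0$ for all $X\in TM_1$, $Y\in TM_2$ and every $Z\in TM$.

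Now decompose $H$ according to the product bigrading
\[
\wedge^3 T^*M \;=\; \bigoplus_{p+q=3} \wedge^p T^*M_1 \otimes \wedge^q T^*M_2,
\]
writing $H=H_{30}+H_{21}+H_{12}+H_{03}$. The vanishing from the previous step, tested with $Z\in TM_1$ and $Z\in TM_2$ respectively, isolates and kills precisely the components $H_{21}$ and $H_{12}$, leaving $H=H_{30}+H_{03}$. Because $\nabla^g$ on a Riemannian product is the direct sum of the factor Levi-Civita connections, the above bigrading is $\nabla^g$-invariant, and the parallelism $\nabla^gH=0$ therefore separates into $\nabla^gH_{30}=0$ and $\nabla^gH_{03}=0$.

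Finally, evaluating $\nabla^g_{X_2}H_{30}=0$ for $X_2\in TM_2$ in a product chart, where all Christoffel symbols mixing $M_1$- and $M_2$-indices vanish, shows that the component functions of $H_{30}$ are constant along $M_2$-directions; hence $H_{30}$ is the pullback of some $H_1\in\Omega^3(M_1)$, and symmetrically $H_{03}$ is the pullback of some $H_2\in\Omega^3(M_2)$. The only delicate step is this last descent, but once the splitting of the product Levi-Civita connection is available it reduces to a standard product-manifold computation.
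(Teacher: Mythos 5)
Your proof is correct and follows essentially the same route as the paper: on a Riemannian product the mixed sectional curvatures vanish, so Proposition \ref{prop:generalized-Riemann-flat}\ref{prop:generalized-Riemann-flat:sec:Riemannian} forces $H(X_1,X_2)=0$ and hence kills the mixed components of $H$. You additionally carry out the descent of the pure components $H_{30}$ and $H_{03}$ to the respective factors using $\nabla^gH=0$, a step the paper's proof leaves implicit.
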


\begin{proof}
The Riemannian product condition implies that $\nabla^g=\nabla^{g_1}\oplus\nabla^{g_2}$ and $R^g=R^{g_1}+R^{g_2}$. This gives that if $X_i\in\Xfrak(M_i)$, then $R^g(X_1,X_2,X_2,X_1)=0$, which means that $\sec^g(X_1,X_2)=0$. But then Proposition \ref{prop:generalized-Riemann-flat}\ref{prop:generalized-Riemann-flat:sec:Riemannian}
implies that $H(X_1,X_2)=0$.
\end{proof}

Hence, if $(M,g)$ is 1-connected and complete, then the triple $(M,g,H)$ decomposes as a product of irreducible triples. We restrict our problem, thus, to 1-connected, complete and irreducible (meaning $\nabla^g$-irreducible) spaces.

First of all, the case $\lambda^\pm\neq\mp\frac{1}{3}$ (the non-Levi-Civita case) reduces to the case of irreducible Bismut-flat manifolds, which was already classified by Cartan and Schouten \cite{cartan-schouten,agricola-friedrich-bismut-flat}.

\begin{proposition} \label{prop:flat-canonical-non-Levi-Civita}
Let $(M,g,H)$ be a triple such that $\Tbb M$ admits a flat generalized Bismut connection with $\lambda^\pm\neq\mp\frac{1}{3}$ and such that $H\neq 0$ and $(M,g)$ is a 1-connected, complete and irreducible Riemannian manifold. Then $(M,g)$ is a compact simple Lie group with a bi-invariant metric and $H$ is a multiple of the Cartan 3-form.
\end{proposition}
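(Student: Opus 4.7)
The plan is to reduce the statement to the classical Bismut-flat setting and then invoke the Cartan--Schouten theorem, excluding the $\Sbb^7$ exception by using that $H$ is closed.

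First, under the hypothesis $(\lambda^+, \lambda^-) \neq (-\tfrac{1}{3}, \tfrac{1}{3})$, the concluding statement of Proposition \ref{prop:generalized-Riemann-flat} yields $R^\pm = 0$, while item (ii) of the same proposition gives $\nabla^g H = 0$. Combined with $H \neq 0$, the triple $(M, g, H)$ is Bismut-flat in the classical sense, with the additional properties of being 1-connected, complete and $\nabla^g$-irreducible.

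I would then apply the Cartan--Schouten theorem \cite{cartan-schouten, agricola-friedrich-bismut-flat}: under these assumptions, $(M, g, H)$ is isometric, as a Bismut manifold, either to a compact simple Lie group equipped with a bi-invariant metric and $H$ proportional to the Cartan 3-form, or to the round 7-sphere $\Sbb^7$ with the 3-form coming from the octonionic parallelism. To exclude the second possibility, I would use that $\Tbb M$ is an exact Courant algebroid, so the Dorfman bracket forces $dH = 0$; the 3-form on $\Sbb^7$ arising from the octonionic parallelism, however, is not closed (its exterior derivative is a nonzero multiple of the Riemannian volume form), which contradicts $dH = 0$. Hence $(M, g)$ is a compact simple Lie group with a bi-invariant metric, and the Cartan--Schouten conclusion directly identifies $H$ as a multiple of the Cartan 3-form.

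The main obstacle is ruling out the $\Sbb^7$ case; all the remaining work is absorbed into Proposition \ref{prop:generalized-Riemann-flat} and the classical Cartan--Schouten classification, and the only extra input needed is the closedness of $H$, which is automatic for the Dorfman bracket on $\Tbb M$.
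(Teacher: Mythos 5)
Your proposal is correct and follows the paper's strategy almost exactly: Proposition \ref{prop:generalized-Riemann-flat} gives $R^\pm=0$ and $\nabla^gH=0$, so $(M,g,H)$ is Bismut-flat, and the Cartan--Schouten theorem leaves only the compact simple Lie group and the $\Sbb^7$ cases. Where you diverge is in excluding $\Sbb^7$. The paper invokes Proposition \ref{prop:generalized-Riemann-flat}\ref{prop:generalized-Riemann-flat:BiH2:Riemannian}, i.e.\ $\Bi(H^2)=0$ when $(\lambda^+,\lambda^-)\neq(-\frac{1}{3},\frac{1}{3})$, which is exactly the invariant governing the Cartan--Schouten dichotomy. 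You instead use $dH=0$, which is indeed automatic for the Dorfman bracket on $\Tbb M$, together with the non-closedness of the octonionic torsion form on $\Sbb^7$; this is equally valid, since for a flat metric connection with parallel skew torsion $T$ one has $dT=2\sigma_T$ with $\sigma_T$ essentially $\Bi(H^2)$, and $\sigma_T\neq 0$ on $\Sbb^7$ (equivalently, the $\Sbb^7$ torsion form is a nearly parallel $G_2$-form, hence not closed, whereas $\nabla^gH=0$ already forces $dH=0$). One small correction: on a 7-manifold $dH$ is a 4-form, so on $\Sbb^7$ it is a nonzero multiple of the Hodge dual $*\varphi$ of the $G_2$ 3-form, not of the Riemannian volume form. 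A minor advantage of your route is that the exclusion of $\Sbb^7$ uses only exactness of the Courant algebroid and Bismut-flatness, not the value of $\lambda^\pm$; the hypothesis $\lambda^\pm\neq\mp\frac{1}{3}$ is then needed only to obtain $R^\pm=0$ in the first step.
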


\begin{proof}
The triple $(M,g,H)$ is Bismut-flat, by Proposition \ref{prop:generalized-Riemann-flat}. By the Cartan--Schouten theorem on the classification of Bismut-flat manifolds, there are only two possible cases \cite[Thm. 2.2]{agricola-friedrich-bismut-flat}: either $M$ is a compact simple Lie group with a bi-invariant metric and $H$ a multiple of the Cartan 3-form, or $M$ is isometric to $\Sbb^7$. This depends on whether $\Bi(H^2)=0$ or $\Bi(H^2)\neq 0$, respectively. Since $\Bi(H^2)=0$, because $\lambda^\pm\neq \mp\frac{1}{3}$ (Proposition \ref{prop:generalized-Riemann-flat}\ref{prop:generalized-Riemann-flat:BiH2:Riemannian}), the second case is ruled out.
\end{proof}

We now turn to the case $\lambda^\pm=\mp\frac{1}{3}$, where the previous argument breaks down, because the manifold is a priori not Bismut-flat. Notice that in the case that $\mu=\frac{1}{3}$, this will be the Levi-Civita case.

Before turning to this case, though, let us recall some standard facts, which we will need, and which can be found for instance in \cite[Chap. V, Sect. 12]{bredon}. Let $G$ be a compact connected Lie group, with Lie algebra $\gfrak$, acting smoothly on a manifold $N$ and denote by $\Omega^\bullet(N)^G$ the cochain complex of left-invariant differential forms on $N$. Then averaging against a Haar measure on $G$ gives an isomorphism $H^\bullet(N,\R)\cong H^\bullet(\Omega^\bullet(N)^G)$. 

This allows us to write $H^\bullet(G,\R)$ in two different ways. First, if we let $G$ act on itself by left-translation, then $\Omega^\bullet(G)^G\cong \wedge^\bullet\gfrak^*$, by evaluating at the identity element of $G$, and hence $H^\bullet(G,\R)\cong H^\bullet(\gfrak,\R)$, the Chevalley--Eilenberg cohomology. Secondly, if we let $G\times G$ act on $G$ by $(g,h)\cdot k:=gkh^{-1}$, then $G\times G$-invariant forms are left-invariant forms which are also conjugation-invariant, also called bi-invariant. Hence, bi-invariant forms are identified with $(\wedge^\bullet\gfrak^*)^G$, with respect to the adjoint action of $G$ on $\gfrak$. Moreover, any $\Ad(G)$-invariant form on $\gfrak$ is closed, and since $H^\bullet(G,\R)\cong H^\bullet((\wedge^\bullet\gfrak^*)^G)$, we obtain finally that $H^\bullet(G,\R)\cong (\wedge^\bullet\gfrak^*)^G$, that is, each cohomology class of $G$ has a unique bi-invariant representative.

Some other facts that we will need are that $[\gfrak,\gfrak]=\gfrak$ if and only if $H^1(\gfrak,\R)=0$, and that this implies  $H^2(\gfrak,\R)=0$ as well. A slightly more elaborate argument gives also that if $H^1(\gfrak,\R)=0$ then the map $(S^2\gfrak^*)^G\to (\wedge^3\gfrak^*)^G$ sending $\kappa$ to the 3-form given by $H_\kappa(X,Y,Z):=\kappa([X,Y],Z)$ is an isomorphism. Lastly, if $\gfrak$ is simple, then $H^3(\gfrak,\R)=\R$, generated by the Cartan 3-form $H_\kappa$, where $\kappa$ is now the Killing form of $\gfrak$.

\begin{proposition} \label{prop:flat-canonical-Levi-Civita}
Let $(M,g,H)$ be a triple such that $\Tbb M$ admits a flat generalized Bismut connection with $\lambda^\pm=\mp\frac{1}{3}$ and such that $H\neq 0$ and $(M,g)$ is a 1-connected, complete and irreducible Riemannian manifold. Then $(M,g)$ is a compact simple Lie group with a bi-invariant metric and $H$ is a multiple of the Cartan 3-form.
\end{proposition}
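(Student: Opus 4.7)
The plan is to combine the structural identities from Proposition \ref{prop:generalized-Riemann-flat} (which still hold in the Levi-Civita case $\lambda^\pm=\mp\frac{1}{3}$) with the classification of irreducible compact Riemannian symmetric spaces, thereby reducing the statement to a representation-theoretic rigidity for parallel 3-forms.

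First I would invoke Proposition \ref{prop:generalized-Riemann-flat} to obtain $F=0$, $\nabla^g H=0$ and $\scal^g=\tfrac{1}{4}\norm{H}_g^2$. Since $H$ is parallel and non-zero, $\norm{H}_g^2$ is a positive constant, and hence so is $\scal^g$. Combined with Corollary \ref{cor:locally-symmetric-F-zero} and the hypotheses that $(M,g)$ is 1-connected, complete and $\nabla^g$-irreducible, this forces $(M,g)$ to be a 1-connected, irreducible Riemannian symmetric space of compact type. By Cartan's classification such a space is either of Type II, i.e., a compact simple Lie group with a bi-invariant metric, or of Type I, i.e., of the form $G/K$ with $G$ compact simple, $K$ the identity component of the fixed-point set of an involution of $G$, and the isotropy representation of $K$ on the tangent space at the origin irreducible.

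The main task, and in my view the main obstacle, is to rule out Type I. Write $\gfrak=\mathfrak{k}\oplus\mathfrak{m}$ for the Cartan decomposition, so that $[\mathfrak{k},\mathfrak{m}]\subseteq\mathfrak{m}$ and $[\mathfrak{m},\mathfrak{m}]\subseteq\mathfrak{k}$. A parallel form on $G/K$ corresponds by evaluation at the origin to a $K$-invariant element of $\wedge^\bullet\mathfrak{m}^*$; since parallel forms on a compact symmetric space are automatically harmonic, the non-zero form $H$ produces a non-zero class in $H^3(G/K,\R)\cong(\wedge^3\mathfrak{m}^*)^K$. I would then argue, using the Chevalley--Eilenberg complex $\wedge^\bullet\gfrak^*$ with its bigrading induced by $\gfrak^*=\mathfrak{k}^*\oplus\mathfrak{m}^*$, the bracket inclusions above, and the irreducibility of the isotropy representation, that $(\wedge^3\mathfrak{m}^*)^K=0$ in every Type I case. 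This vanishing is the genuinely new input beyond the Bismut-flat Cartan--Schouten argument that settled Proposition \ref{prop:flat-canonical-non-Levi-Civita}, and I expect the bulk of the work to reside here; a case-by-case inspection of Cartan's list (where $b_3$ is known to vanish outside the group case) can serve as a fallback if a uniform cohomological argument proves elusive.

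With Type I excluded, $(M,g)$ is a compact simple Lie group $G$ endowed with a bi-invariant metric. The parallel 3-form $H$ is then automatically bi-invariant and so corresponds to a non-zero element of $(\wedge^3\gfrak^*)^G\cong H^3(\gfrak,\R)$. Since $\gfrak$ is simple, this cohomology group is one-dimensional and generated by the Cartan 3-form (as recalled in the paragraph preceding the statement), so $H$ is a non-zero multiple of the Cartan 3-form, completing the proof.
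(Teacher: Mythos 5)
Your overall architecture coincides with the paper's: reduce via Proposition \ref{prop:generalized-Riemann-flat} and Corollary \ref{cor:locally-symmetric-F-zero} to a 1-connected irreducible symmetric space of compact type, split into the group case ($G=K\times K$) and the case $G$ compact simple, handle the group case via $(\wedge^3\kfrak^*)^K\cong H^3(\kfrak,\R)=\R$, and rule out the other case by showing a nonzero parallel 3-form cannot exist there. The problem is that this last step --- the only genuinely new content of the proposition, as you yourself note --- is not actually proved. You assert that $(\wedge^3\mathfrak{m}^*)^K=0$ for every Type I space and say you ``would argue'' this from the Chevalley--Eilenberg complex, the Cartan relations and irreducibility of the isotropy representation, but no such argument is supplied; the fallback of inspecting Cartan's list is likewise not carried out and rests on the unproved (though true) assertion that $b_3$ vanishes for all Type I irreducible spaces. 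As it stands the proposal reduces the proposition to exactly the statement that needed proving.

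For comparison, here is how the paper closes this gap, without any case analysis. Extend the $\Ad(K)$-invariant 3-form $H\in\wedge^3\pfrak^*$ by zero on $\kfrak$ to $\tilde H\in(\wedge^3\gfrak^*)^K$; the Cartan relations and $K$-invariance show $d_\gfrak\tilde H=0$. Since $\gfrak$ is simple, $H^3(\gfrak,\R)\cong(S^2\gfrak^*)^G$, so $\tilde H=H_{\tilde\kappa}+d_\gfrak\alpha$ with $\tilde\kappa$ invariant symmetric and, after averaging over the compact group $K$, with $\alpha\in(\wedge^2\gfrak^*)^K$. Contracting with $X\in\kfrak$ and using $i_X\tilde H=0$ and $[\gfrak,\gfrak]=\gfrak$ gives $\tilde\kappa(X,\cdot)=\alpha(X,\cdot)$, whence $\tilde\kappa|_{\kfrak\times\kfrak}$ is both symmetric and skew, hence zero; as $\tilde\kappa$ must be a multiple of the (nondegenerate) Killing form, $\tilde\kappa=0$. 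Then $\tilde H=d_\gfrak\alpha$ with $i_\kfrak\alpha=0$, and $[\pfrak,\pfrak]\subseteq\kfrak$ forces $H=0$, contradicting $H\neq 0$. Incorporating this (or a complete, referenced case-by-case verification of $b_3=0$) is what your proposal still needs.
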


\begin{proof}
By Proposition \ref{cor:locally-symmetric-F-zero}, $(M,g)$ is an irreducible symmetric space of compact type. Write $M=G/K$ for an irreducible symmetric pair $(G,K)$. Then there are two possibilities \cite[Prop. 6.40]{ziller}: either $G$ is compact simple, or $K$ is compact simple and $G=K\times K$, in which case $M=(K\times K)/K$ is isometric to $K$ with a bi-invariant metric. Write $\gfrak=\kfrak\oplus\pfrak$ for the corresponding Cartan decomposition, where we identify $\pfrak\cong T_xM$ for some fixed $x\in M$.

Because $M$ is an irreducible symmetric space, the holonomy representation at $x$ is equivalent to the adjoint representation of $K$ on $\pfrak$. Since $\nabla^gH=0$, the value of $H$ at $x$ gives an $\Ad(K)$-invariant 3-form on $\pfrak$, i.e., an element of $(\wedge^3\pfrak^*)^K$, which we also denote by $H$.

If we are in the second case of the above dichotomy, that is, if $K$ is compact simple and $G=K\times K$, then $M\cong K$ with a bi-invariant metric. The Cartan decomposition of $G$ is just $\gfrak=\kfrak\oplus\kfrak$, so by the comments in the previous paragraph, $H\in(\wedge^3\kfrak^*)^K$. Since this space is isomorphic to $H^3(\kfrak,\R)=\R$, we conclude that $H$ must be a multiple of the Cartan 3-form of $\kfrak$.

To conclude the proof, we will see that the first case of the dichotomy cannot happen unless $H=0$. Assume that $G$ is compact simple. Extend $H$ to a 3-form $\tilde H\in (\wedge^3\gfrak^*)^K$ by setting it to 0 on $\kfrak$. If $d_\gfrak$ denotes the differential of the Chevalley--Eilenberg complex, then we have that $d_\gfrak\tilde H=0$. Indeed, if we feed $d_\gfrak\tilde H$ with none, two, three or four elements in $\kfrak$, then it gives 0, because $i_\kfrak \tilde H=0$ and because of the Cartan relations $[\kfrak,\kfrak],[\pfrak,\pfrak]\subseteq\kfrak$. If $X_0\in\kfrak$ and the rest are in $\pfrak$, then
\begin{align*}
d_\gfrak\tilde H(X_0,X_1,X_2,X_3) &=-\tilde H([X_0,X_1],X_2,X_3)+\tilde H([X_0,X_2],X_1,X_3)\\
&\qquad\qquad -\tilde H([X_0,X_3],X_1,X_2)=0,
\end{align*}
because $\tilde H$ is $\Ad(K)$-invariant.

Since $H^3(\gfrak,\R)\cong (\wedge^3\gfrak^*)^G\cong (S^2\gfrak^*)^G$, there is a unique $\tilde \kappa\in(S^2\gfrak^*)^G$ such that $\tilde H=H_{\tilde \kappa}+d_\gfrak\alpha$, for some $\alpha\in \wedge^2\gfrak^*$. Since both $\tilde H$ and $H_{\tilde \kappa}$ are $\Ad(K)$-invariant and $K$ is compact, by averaging over $K$ we can actually take $\alpha\in(\wedge^2\gfrak^*)^K$. In this case, if $X\in\kfrak$, then
\[
\tilde \kappa(X,[\cdot,\cdot])=i_X H_{\tilde \kappa}=-i_X d_\gfrak\alpha=-\alpha(X,[\cdot,\cdot]).
\]
Since $[\gfrak,\gfrak]=\gfrak$, we conclude that $\tilde \kappa(Y,X)=\alpha(Y,X)$ for all $Y\in\gfrak$. In particular, $\tilde \kappa(X,X)=0$, which means that $\tilde \kappa|_{\kfrak\times\kfrak}$ is both symmetric and skew-symmetric, i.e., it vanishes. Since $\tilde \kappa$ is an invariant symmetric bilinear form and $\gfrak$ is simple, it must be a multiple of the Killing form of $\gfrak$. But since the Killing form is non-degenerate (as $\gfrak$ is compact) and $\kfrak$ and $\pfrak$ are orthogonal to each other (by the Cartan relations), then $\tilde \kappa|_{\kfrak\times\kfrak}=0$ if and only if this multiple is 0, i.e., if and only if $\tilde \kappa=0$. Then we have $\tilde H=d_\gfrak\alpha$ and  $i_\kfrak\alpha=0$, which imply that $H=0$, because of the Cartan relation $[\pfrak,\pfrak]\subseteq\kfrak$.
\end{proof}

From the previous discussion, we conclude our main result.

\begin{theorem} \label{thm:flat-canonical-all}
Let $(M,g,H,\gfrak,\nabla,F)$ be a tuple with $H\neq 0$ and $(M,g)$ a 1-connected, complete and irreducible Riemannian manifold, and such that $\Tbb M\oplus\gfrak$ admits a flat generalized Bismut connection $D$. Then
\begin{enumerate}[label=\upshape{(\roman*)}]
    \item $F=0$ and $\gfrak$ is trivializable in such a way that $D$ becomes a trivial extension of a flat generalized Bismut connection on $\Tbb M$ via \eqref{eq:gfrak-part},
    \item $(M,g,H)$ is a compact simple Lie group with a bi-invariant metric and a multiple of the Cartan 3-form,
\end{enumerate}
\end{theorem}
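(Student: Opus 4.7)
The plan is to assemble this final theorem directly from the three preparatory results that precede it in this section, so the argument is more bookkeeping than new work. First I would invoke Corollary \ref{cor:trivial-gfrak} to dispatch part (i) entirely: under the Riemannian and 1-connectedness hypotheses, that corollary already yields $F=0$, the trivialization of $\gfrak$ via parallel transport of $\nabla$, the formulas \eqref{eq:gfrak-part} for $D$ on the $\gfrak$-summand, and the crucial reduction that $D|_{\Tbb M}$ is itself a flat generalized Bismut connection on the exact Courant algebroid $\Tbb M$.

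Once we are reduced to $\Tbb M$, I would split into two cases according to the parameters $\lambda^\pm$. In the case $(\lambda^+,\lambda^-)\neq(-\tfrac{1}{3},\tfrac{1}{3})$, I would simply apply Proposition \ref{prop:flat-canonical-non-Levi-Civita}, which already gives the desired conclusion (compact simple Lie group with bi-invariant metric and a multiple of the Cartan 3-form) via the Cartan--Schouten classification of Bismut-flat manifolds, using that $\Bi(H^2)=0$ rules out the $\mathbb{S}^7$ possibility. In the complementary case $\lambda^\pm=\mp\tfrac{1}{3}$ (where $D$ is the canonical generalized Levi-Civita connection when additionally $\mu=\tfrac{1}{3}$), I would apply Proposition \ref{prop:flat-canonical-Levi-Civita}, which gives the same conclusion through the symmetric-space/Chevalley--Eilenberg argument rather than through Bismut-flatness.

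Since $H\neq 0$ by hypothesis, both cases output a nontrivial triple $(M,g,H)$ of the stated form, and combining with part (i) one obtains the full tuple description, concluding (ii).

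The main obstacle has already been absorbed into the supporting propositions: the delicate step is Proposition \ref{prop:flat-canonical-Levi-Civita}, where Bismut-flatness is not available and one must instead extract $H$ from an $\Ad(K)$-invariant 3-form on $\pfrak$ and use the Chevalley--Eilenberg cohomology of $\gfrak$ together with the Cartan relations $[\kfrak,\kfrak],[\pfrak,\pfrak]\subseteq\kfrak$ to rule out the $G$ simple case. At the level of the final theorem itself, there is no further obstacle beyond the clean case split on $\lambda^\pm$.
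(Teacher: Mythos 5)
Your proposal is correct and follows exactly the paper's own route: the theorem is assembled from Corollary \ref{cor:trivial-gfrak} (giving part (i) and the reduction to $\Tbb M$) together with the exhaustive case split between Proposition \ref{prop:flat-canonical-non-Levi-Civita} for $(\lambda^+,\lambda^-)\neq(-\tfrac{1}{3},\tfrac{1}{3})$ and Proposition \ref{prop:flat-canonical-Levi-Civita} for $\lambda^\pm=\mp\tfrac{1}{3}$. No further argument is needed, and the paper itself presents the theorem as an immediate consequence of this preceding discussion.
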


\bigskip
\section{On Lie groups} \label{sect:lie-groups}

In the previous section we saw that the building blocks of spaces with flat canonical generalized Levi-Civita connections are 1-connected compact simple Lie groups with a bi-invariant metric and a multiple of the Cartan 3-form. A natural question to ask, now, is: 
Are there non-flat generalized Levi-Civita connections on these Lie groups?

The answer is yes, and many, as we will see. As is common with Lie groups, we will focus on left-invariant connections. We focus on generalized connections on $\Tbb M$, since these can be trivially extended to $\Tbb M\oplus\gfrak$ via \eqref{eq:gfrak-part}.

Let $G$ be a Lie group with Lie algebra $\gfrak$ and a bi-invariant metric $g$, and let $H$ be the Cartan 3-form. Consider the left action of $G$ on itself, which we denote by $L$. It lifts to actions on $TG$ and $T^*G$:
\[
\begin{tikzcd}
	TG & TG \\
	G & G
	\arrow["{L_{h*}}", from=1-1, to=1-2]
	\arrow[from=1-1, to=2-1]
	\arrow["{L_h}"', from=2-1, to=2-2]
	\arrow[from=1-2, to=2-2]
\end{tikzcd} \qquad \text{and} \qquad
\begin{tikzcd}
	{T^*G} & {T^*G} \\
	G & G
	\arrow["{L_{h^{-1}}^*}", from=1-1, to=1-2]
	\arrow[from=1-1, to=2-1]
	\arrow["{L_h}"', from=2-1, to=2-2]
	\arrow[from=1-2, to=2-2]
\end{tikzcd},\qquad\text{for $h\in G$.}
\]
Let us denote by $L$ also the $G$-action on $\Tbb G$, i.e.,
\[
L_h(X+r)=L_{h*}X+L_{h^{-1}}^*r,\quad\text{for $h\in G$ and $X+r\in\Tbb G$.}
\]
It is well-known that the diffeomorphisms preserving the Courant algebroid structure on $\Tbb G$ are those which leave $H$ invariant. Since $H$ is left-invariant, this means that the action $L$ on $\Tbb G$ is by Courant algebroid automorphisms.

We denote by $L$ as well the induced action on sections of $\Tbb G$, i.e., if $a\in\Gamma(\Tbb G)$ and $h\in G$, then
\[
L_ha(k):=L_h(a(h^{-1}k)),\quad\text{for $k\in G$.}
\]

\begin{definition}
A generalized connection $D$ on $\Tbb G$ is said to be \textbf{left-invariant} if
\[
D_{L_ha}(L_hb)=L_h(D_ab),\quad\text{for all $a,b\in\Gamma(\Tbb G)$ and $h\in G$.}
\]
\end{definition}

Recall that by Proposition \ref{prop:connection-tensors} a metric generalized connection with pure-type torsion on $\Tbb G$ is given by tensors $B^\pm\in\Omega^1(G,\so(TG))$, and that the connection is Levi-Civita if and only if $\Bi(B^\pm)=\mp H$.

\begin{lemma} \label{lem:G-equivariant-Levi-Civita}
A metric generalized connection with pure-type torsion on $\Tbb G$ is left-invariant if and only if both $B^\pm$ are left-invariant.
\end{lemma}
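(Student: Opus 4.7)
The plan is to exploit the parameterization of Proposition~\ref{prop:connection-tensors}, which on $\Tbb G$ (trivial $\gfrak$) reduces a metric generalized connection with pure-type torsion to the pair $B^\pm \in \Omega^1(G, \so(TG))$ via
\[
D_{X^\mp}Y^\pm = (\nabla^\pm_X Y)^\pm, \qquad D_{X^\pm}Y^\pm = (\nabla^\pm_X Y + B^\pm_X Y)^\pm,
\]
and to show that left-invariance of $D$ reduces to left-invariance of $B^\pm$ because every other ingredient on the right-hand sides is already left-invariant.

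First I would collect three preliminary facts. (1) The Bismut connections $\nabla^\pm = \nabla^g \pm \frac{1}{2}H$ are left-invariant on $TG$: the Levi-Civita connection $\nabla^g$ is left-invariant because $g$ is bi-invariant, and $H$ is the Cartan 3-form, hence even bi-invariant. (2) The lifts $X \mapsto X^\pm = X \pm gX$ from $TG$ to $V_\pm$ are $L$-equivariant, that is, $L_h(X^\pm) = (L_{h*}X)^\pm$; this follows because the generalized metric is built out of $g$, so the $\pm 1$-eigenbundles and the explicit lift formulas are $L$-invariant. (3) By the Leibniz rule for $D$ and the compatibility $L_h(fa) = (f\circ L_{h^{-1}})(L_h a)$, checking the left-invariance identity $D_{L_h a}(L_h b) = L_h(D_a b)$ on all sections reduces to checking it on the $\Cinf(G)$-generating set of left-invariant sections.

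With these in hand, I would evaluate on left-invariant $X, Y \in \Xfrak(G)$. The first formula gives $(\nabla^\pm_X Y)^\pm$, which is automatically left-invariant by (1) and (2), so it imposes no condition on $D$. The second formula gives $(\nabla^\pm_X Y + B^\pm_X Y)^\pm$; the first summand is again left-invariant, so left-invariance of $D$ is equivalent to left-invariance of $(B^\pm_X Y)^\pm$ for all left-invariant $X, Y$. By (2) this is equivalent to $B^\pm_X Y$ being a left-invariant vector field, and hence to $B^\pm$ being a left-invariant tensor field on $G$. The converse is immediate from the same formulas: if $B^\pm$ are left-invariant, all terms on the right-hand sides transform equivariantly under $L$, so $D$ does too.

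I expect no genuine obstacle; the only care required is the bookkeeping for the lifts $X \mapsto X^\pm$ and for the reduction to left-invariant sections via the Leibniz rule. The essential content is captured already in point~(1): in the characterization of $D$, everything that is not a free choice is constructed from bi-invariant data.
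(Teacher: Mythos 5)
Your proposal is correct and takes essentially the same approach as the paper: both arguments rest on the equivariance of the lifts $X\mapsto X^\pm$, the left-invariance of the Bismut connections $\nabla^\pm$ (from $L_h$ being an isometry and $H$ being left-invariant), and the observation that in the parameterization of Proposition \ref{prop:connection-tensors} the only remaining data is $B^\pm$. The only cosmetic difference is that you reduce to left-invariant sections via the Leibniz rule while the paper verifies the equivariance identity directly on arbitrary vector fields; the content is identical.
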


\begin{proof}
It is straightforward to see that if $X\in TG$ and $h\in G$, then $(L_{h*}X)^\pm=L_h(X^\pm)$. Recall that for any isometry $\varphi$ of $G$ and $X,Y\in\Xfrak(G)$ we have that $\nabla^g_{\varphi_*X}(\varphi_*Y)=\varphi_*(\nabla^g_XY)$. Since $L_h$ is an isometry and $H$ is left-invariant, we have that
\[
\nabla^\pm_{L_{h*}X}(L_{h*}Y)=L_{h*}(\nabla^\pm_XY),
\]
which gives that
\[
D_{L_hX^\mp}(L_hY^\pm)=D_{(L_{h*}X)^\mp}(L_{h*}Y)^\pm=(L_{h*}(\nabla^\pm_XY))^\pm=L_h(D_{X^\mp}Y^\pm).
\]
Similarly, writing $\nablabarpm:=\nabla^\pm+B^\pm$,
\[
\nablabarpm_{L_{h*}X}(L_{h*}Y)=L_{h*}(\nabla^\pm_XY)+L_{h*}(L_h^*B^\pm(X,Y))=L_{h*}(\nabla^\pm_XY+L_h^*B^\pm(X,Y)),
\]
whereas
\[
L_h(D_{X^\pm}Y^\pm)=(L_{h*}(\nabla^\pm_XY+B^\pm(X,Y)))^\pm.
\]
It is clear now that $D$ is left-invariant if and only if $L_h^*B^\pm=B^\pm$ for all $h\in G$.
\end{proof}

For the classification of flat left-invariant generalized Levi-Civita connections, consider the following sequence:
\begin{equation}\label{eq:es-alternation}
\begin{tikzcd}
	0 & {S^3\gfrak^*} & {S^2\gfrak^*\otimes \gfrak^*} & {\gfrak^*\otimes\wedge^2\gfrak^*} & {\wedge^3\gfrak^*} & 0
	\arrow[from=1-1, to=1-2]
	\arrow[from=1-2, to=1-3]
	\arrow["{\Alt_{23}}", from=1-3, to=1-4]
	\arrow["\Bi", from=1-4, to=1-5]
	\arrow[from=1-5, to=1-6]
\end{tikzcd}
\end{equation}
where $\Alt_{23}$ is the skew-symmetrization of the second and third components.

The sequence \eqref{eq:es-alternation} is just a piece of the Koszul complex of the Weil algebra of $\gfrak$, and since the Weil algebra is acyclic, it is actually exact. We give a direct proof of this fact.

\begin{lemma} \label{lem:weil-algebra-acyclic}
The sequence \eqref{eq:es-alternation} is exact. Moreover, the sequence remains exact when restricted to $\Ad(G)$-invariant tensors.
\end{lemma}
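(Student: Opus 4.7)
The plan is to verify exactness at each of the three middle spots in turn---injectivity of the inclusion $S^3\gfrak^*\hookrightarrow S^2\gfrak^*\otimes\gfrak^*$ being automatic---and then to observe that every ingredient is natural enough to restrict to $\Ad(G)$-invariants. The two easier spots come first: at $S^2\gfrak^*\otimes\gfrak^*$, a tensor $B$ which is symmetric in the first two arguments and satisfies $\Alt_{23}(B)=0$ is symmetric in positions $(1,2)$ and in $(2,3)$, hence totally symmetric, so it lies in $S^3\gfrak^*$. Surjectivity of $\Bi$ onto $\wedge^3\gfrak^*$ is equally immediate, since $\Bi$ composed with the natural inclusion $\wedge^3\gfrak^*\hookrightarrow\gfrak^*\otimes\wedge^2\gfrak^*$ is multiplication by $3$.

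The main obstacle is exactness at the middle term $\gfrak^*\otimes\wedge^2\gfrak^*$. The inclusion $\operatorname{im}\Alt_{23}\subseteq\ker\Bi$ reduces to a short computation using the symmetry of $B\in S^2\gfrak^*\otimes\gfrak^*$ in its first two arguments to cancel terms pairwise. For the reverse inclusion, my plan is to exhibit an explicit splitting: given $T\in\gfrak^*\otimes\wedge^2\gfrak^*$ satisfying $\Bi(T)=0$, set
\[
B(v_1,v_2,v_3):=\tfrac{2}{3}\bigl(T(v_1,v_2,v_3)+T(v_2,v_1,v_3)\bigr).
\]
Then $B$ is manifestly symmetric in its first two arguments, and a brief expansion of $\Alt_{23}(B)$, combined with the Bianchi identity $\Bi(T)=0$ and the skew-symmetry of $T$ in its last two arguments, yields $\Alt_{23}(B)=T$. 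This simultaneously completes the exactness argument and furnishes an explicit section of the sequence.

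Finally, for the $\Ad(G)$-invariant statement I would simply observe that every map appearing---$\Alt_{23}$, $\Bi$, the inclusion $S^3\gfrak^*\hookrightarrow S^2\gfrak^*\otimes\gfrak^*$, and the splitting $T\mapsto B$ above---is built from symmetrizations and antisymmetrizations of indices alone, so is $\mathrm{GL}(\gfrak)$-equivariant, and in particular intertwines the $\Ad(G)$-action. Thus if $T$ is $\Ad(G)$-invariant so is $B$, and exactness passes verbatim to the $\Ad(G)$-invariant subcomplex with no need to invoke averaging or complete reducibility.
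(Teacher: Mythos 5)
Your proposal is correct and follows essentially the same route as the paper: the same explicit splitting $B(v_1,v_2,v_3)=\tfrac{2}{3}\bigl(T(v_1,v_2,v_3)+T(v_2,v_1,v_3)\bigr)$ for exactness at $\gfrak^*\otimes\wedge^2\gfrak^*$, the same easy checks at the remaining nodes, and the same equivariance observation for the $\Ad(G)$-invariant subcomplex. Nothing further is needed.
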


\begin{proof}
Exactness at the first, second and fourth nodes is a simple check, since $\Bi$ on elements of $\gfrak^*\otimes\wedge^2\gfrak^*$ is just the total skew-symmetrization. Let us check exactness at the third node. Let $B\in\gfrak^*\otimes\wedge^2\gfrak^*$ such that $\Bi(B)=0$, and define $B'\in S^2\gfrak^*\otimes\gfrak^*$ by
\[
B'(X,Y,Z):=\frac{2}{3}(B(X,Y,Z)+B(Y,X,Z)),\quad\text{ for $X,Y,Z\in\gfrak$.}
\]
Then
\begin{align*}
(\Alt_{23}B')(X,Y,Z) &=\frac{1}{3}(B(X,Y,Z)+B(Y,X,Z)-B(X,Z,Y)-B(Z,X,Y))\\
&=B(X,Y,Z).
\end{align*}
Moreover, it is straightforward to check that $\Alt_{23}$ restricted to $(S^2\gfrak^*\otimes\gfrak^*)^G$ maps to $(\gfrak^*\otimes\wedge^2\gfrak^*)^G$ and that $\Bi$ restricted to $(\gfrak^*\otimes\wedge^2\gfrak^*)^G$ maps to $(\wedge^3\gfrak^*)^G$, and the same reasoning as before gives exactness also in this case.
\end{proof}

\begin{theorem} \label{thm:lie-groups}
The space of flat left-invariant generalized Levi-Civita connections on $\Tbb G$ is an affine space modeled on
\[
(S^2\gfrak^*\otimes\gfrak^*)^G/(S^3\gfrak^*)^G \oplus (S^2\gfrak^*\otimes\gfrak^*)/S^3\gfrak^*.
\]
In particular, there are non-flat left-invariant generalized Levi-Civita connections on $\Tbb G$.
\end{theorem}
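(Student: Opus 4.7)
The plan is as follows. First, I would invoke Proposition \ref{prop:connection-tensors} to reduce a metric generalized connection with pure-type torsion on $\Tbb G$ to the data of a pair $B^\pm\in\Omega^1(G,\so(TG))$ (the remaining tensors $A,C,L,W,N,F$ are absent since $\Tbb G$ is exact), with the Levi-Civita condition reading $\Bi(B^\pm)=\mp H$. Lemma \ref{lem:G-equivariant-Levi-Civita} then says that $D$ is left-invariant iff both $B^\pm$ are, and via evaluation at the identity each left-invariant $B^\pm$ corresponds to an element of $\gfrak^*\otimes\wedge^2\gfrak^*$.

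Next, I would spell out the flatness conditions. With $F=0$ and no $\gfrak$-tensors, Proposition \ref{prop:curvatures-regular-CA} collapses to
\begin{align*}
\Rcal^+_D(X^+,Y^-)Z^+ &= \bigl(R^+(X,Y)Z-(\nabla^+_YB^+)_XZ\bigr)^+,\\
\Rcal^-_D(X^-,Y^+)Z^- &= \bigl(R^-(X,Y)Z-(\nabla^-_YB^-)_XZ\bigr)^-.
\end{align*}
On a Lie group with bi-invariant metric and Cartan 3-form, a standard left-invariant calculation gives $\nabla^+_XY=[X,Y]$ and $\nabla^-_XY=0$, so $R^+=0$ (Jacobi) and $R^-=0$; thus $D$ is flat iff $\nabla^+B^+=0$ and $\nabla^-B^-=0$ as tensors.

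Then I translate these into algebraic statements on $\gfrak^*\otimes\wedge^2\gfrak^*$. For left-invariant $B^-$, all three terms of $(\nabla^-_YB^-)(X,Z)$ vanish on left-invariant $X,Y,Z$ because $\nabla^-$ kills every left-invariant field; by tensoriality $\nabla^-B^-=0$ is automatic. For left-invariant $B^+$, the identity $\nabla^+_YX=[Y,X]$ means $\nabla^+_Y$ acts on left-invariant sections as $\ad_Y$, hence on the tensor $B^+$ by the differential of the induced adjoint representation on $\gfrak^*\otimes\wedge^2\gfrak^*$; vanishing for all $Y\in\gfrak$ is equivalent, by connectedness of $G$, to $B^+\in(\gfrak^*\otimes\wedge^2\gfrak^*)^G$. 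Consequently the space of flat left-invariant generalized Levi-Civita connections is
\[
\{\,B^+\in(\gfrak^*\otimes\wedge^2\gfrak^*)^G:\Bi(B^+)=-H\,\}\;\oplus\;\{\,B^-\in\gfrak^*\otimes\wedge^2\gfrak^*:\Bi(B^-)=H\,\}.
\]

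To finish, I note that both pieces are non-empty because $B^\pm_0:=\mp\tfrac{1}{3}H$ satisfies the Levi-Civita constraint and is $G$-invariant (since $H$ is bi-invariant). The directions of the two affine slices are $\ker(\Bi)\cap(\gfrak^*\otimes\wedge^2\gfrak^*)^G$ and $\ker(\Bi)$, which by Lemma \ref{lem:weil-algebra-acyclic} are isomorphic to $(S^2\gfrak^*\otimes\gfrak^*)^G/(S^3\gfrak^*)^G$ and $(S^2\gfrak^*\otimes\gfrak^*)/S^3\gfrak^*$ respectively, yielding the stated affine model. The ``in particular'' claim is then immediate: the full space of left-invariant Levi-Civita connections is modeled on $(S^2\gfrak^*\otimes\gfrak^*)/S^3\gfrak^*$ in each factor, which strictly contains its $G$-invariant subspace for any nontrivial $G$, so perturbing $B^+_0$ by any non-$G$-invariant element of $\ker(\Bi)$ produces a left-invariant Levi-Civita connection that fails to be flat. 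The main subtlety to execute carefully is the identification of $\nabla^+_Y$ acting on left-invariant tensors with the infinitesimal adjoint action, together with a clean verification that no further flatness components (beyond the two listed) survive in the exact case.
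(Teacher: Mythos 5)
Your proposal is correct and follows essentially the same route as the paper: reduce to the tensors $B^\pm\in\gfrak^*\otimes\wedge^2\gfrak^*$ via Proposition \ref{prop:connection-tensors} and Lemma \ref{lem:G-equivariant-Levi-Civita}, observe that Bismut-flatness reduces flatness of $D$ to $\nabla^\pm B^\pm=0$, note that $\nabla^-B^-=0$ is automatic while $\nabla^+B^+=0$ is equivalent to $\Ad(G)$-invariance of $B^+$, and identify the modeling spaces with Lemma \ref{lem:weil-algebra-acyclic}. Your explicit non-emptiness check via $B^\pm_0=\mp\tfrac{1}{3}H$ is a small but welcome addition that the paper leaves implicit.
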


\begin{proof}
Let $D$ be a left-invariant generalized Levi-Civita connection on $\Tbb G$. By Lemma \ref{lem:G-equivariant-Levi-Civita}, it is defined by a tensor $B^\pm\in\gfrak^*\otimes\wedge^2\gfrak^*$ such that $\Bi(B^\pm)=\mp H$ (regarding $H$ as an element of $\wedge^3\gfrak^*$, since it is left-invariant). By Proposition \ref{prop:curvatures-regular-CA}, we have that $D$ is flat if and only if $\nabla^\pm B^\pm=0$, since $G$ is Bismut-flat. Recall that the Levi-Civita connection of $G$ on left-invariant vector fields $X,Y$ is given by $\nabla^g_XY=\frac{1}{2}[X,Y]$. Hence, $\nabla^\pm_XY=\frac{1\pm 1}{2}[X,Y]$, from which we obtain, for $X,Y,Z,W\in\gfrak$,
\[
\nabla^\pm_XB^\pm(Y,Z,W)=-\frac{1\pm 1}{2}(B^\pm([X,Y],Z,W)+B^\pm(Y,[X,Z],W)+B^\pm(Y,Z,[X,W])).
\]
From this we obtain that $\nabla^-B^-=0$ and that $\nabla^+B^+=0$ if and only if $B^+$ is $\Ad(G)$-invariant.

Let now $D^i$, for $i=1,2$, be flat left-invariant generalized Levi-Civita connections on $\Tbb G$, defined by tensors $B^\pm_i\in\gfrak^*\otimes\wedge^2\gfrak^*$ such that $\Bi(B^\pm_i)=\mp H$. Then, by Lemma \ref{lem:weil-algebra-acyclic},
\[
B^+_2-B^+_1\in \ker\Bi\cap (\gfrak^*\otimes\wedge^2\gfrak^*)^G=(S^2\gfrak^*\otimes\gfrak^*)^G/(S^3\gfrak^*)^G
\]
and
\[
B^-_2-B^-_1\in\ker\Bi\cap(\gfrak^*\otimes \wedge^2\gfrak^*)=(S^2\gfrak^*\otimes\gfrak^*)/S^3\gfrak^*.
\]
Since the space of all left-invariant generalized Levi-Civita connections on $\Tbb G$ is an affine space \cite{garcia-fernandez-spinors} modeled on
\[
(S^2\gfrak^*\otimes\gfrak^*)/S^3\gfrak^* \oplus (S^2\gfrak^*\otimes\gfrak^*)/S^3\gfrak^*,
\]
it easily follows that there are many non-flat generalized Levi-Civita connections on $G$, corresponding to all tensors in $(S^2\gfrak^*\otimes\gfrak^*)/S^3\gfrak^*$ which are not $\Ad(G)$-invariant.
\end{proof}

\bigskip
\appendix
\section{The pseudo-Riemannian case} \label{app:pseudo-riemannian}

We include here the analogue of Proposition \ref{prop:generalized-Riemann-flat} when $g$ is not necessarily Riemannian. The result and the proof are considerably more convoluted, but it poses an interesting question, which we include at the end of this section.

\begin{proposition} \label{prop:generalized-Riemann-flat:app}
If $\Tbb M\oplus\gfrak$ admits a flat generalized Bismut connection, then
\begin{enumerate}[label=\upshape{(\roman*)}]
    \item $\mu F_{[r,s]}=-\frac{1}{2}F_r F_s$, \label{prop:generalized-Riemann-flat:F-bracket}
    \item $F$ is null, by which we mean that $\norm{F_r X}_g^2=0$, \label{prop:generalized-Riemann-flat:F-null}
	\item $H(F_r X,Y,Z)=H(X,F_r Y,Z)=H(X,Y,F_r Z)$. \label{prop:generalized-Riemann-flat:F-Horthogonal}
\end{enumerate}
Also, the following properties:
\begin{enumerate}[resume,label=\upshape{(\roman*)}]
	\item $F$ is isotropic, by which we mean that $F^2=0$, \label{prop:generalized-Riemann-flat:F-isotropic}
    \item $\nabla^gH=0$, \label{prop:generalized-Riemann-flat:H-parallel}
    \item $R^\pm=-\frac{1}{6}\Bi(H^2)$, \label{prop:generalized-Riemann-flat:Rpm}
	\item $R^g=\frac{1}{12}\Bi(H^2)-\frac{1}{4}H^2$, \label{prop:generalized-Riemann-flat:Rg}
	\item $\sec^g(X,Y)=\displaystyle \frac{1}{4}\frac{\norm{H(X,Y)}_g^2}{\norm{X\wedge Y}_g^2}$, \label{prop:generalized-Riemann-flat:sec}
	\item $\scal^g=\frac{1}{4}\norm{H}_g^2$, which is locally constant, \label{prop:generalized-Riemann-flat:scal}
\end{enumerate}
hold in the case that one of the following conditions is satisfied:
\begin{enumerate}[label=\upshape{(\alph*)}]
    \item $(1+2\lambda^+)(1-3\lambda^-)\neq 0$, \label{prop:generalized-Riemann-flat:degenerate-lambdas}
    \item $\lambda^\pm=\mp\frac{1}{3}$. \label{prop:generalized-Riemann-flat:levi-civita}
\end{enumerate}
Furthermore,
\begin{enumerate}[start=10,label=\upshape{(\roman*)}]
    \item if $\mu\neq -1$, then $\nabla$ is flat, \label{prop:generalized-Riemann-flat:nabla-flat}
	\item if $(\lambda^+,\lambda^-)\neq (-\frac{5}{9},-\frac{2}{3})$, then $H(F_r X,Y)=0$ and $\nabla^gF=0$, \label{prop:generalized-Riemann-flat:HF-zero}
	\item if $(1+2\lambda^+)(1-3\lambda^-)\neq 0$, then $\Bi(H^2)=0$. \label{prop:generalized-Riemann-flat:BiH2}
\end{enumerate}
\end{proposition}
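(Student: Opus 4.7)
The plan is to follow the same template as the Riemannian case (Proposition~\ref{prop:generalized-Riemann-flat}), systematically setting each curvature component from Proposition~\ref{prop:curvatures-generalized-Bismut} to zero, but now tracking $F$-terms and $H$-terms rather than discarding them. First I would tackle the unconditional items (i)--(iii). Equation \eqref{eq:curvaturem-xiYeta}$=0$ gives $\frac{1}{2}F_rF_s+\frac{\mu}{2}F_{[r,s]}+\frac{1}{4}F_sF_r=0$; since $F_{[r,s]}$ is skew in $r,s$, the same antisymmetry-of-both-sides argument as in the Riemannian proof yields $F_rF_s=-F_sF_r$ and hence (i). Then $F_rF_r=0$ combined with skew-adjointness of $F_r$ gives $g(F_rX,F_rX)=-g(F_rF_rX,X)=0$, i.e., (ii); note that in the pseudo-Riemannian case we cannot conclude $F=0$. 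For (iii), the $V_-$-parts of \eqref{eq:curvaturem-XYxi} and \eqref{eq:curvaturem-xiYZ} furnish two different expressions for $(\nabla^-_XF)_rY$; pairing them against a third vector $Z$, equating, and writing $a=H(F_rX,Y,Z)$, $b=H(X,F_rY,Z)$, $c=H(X,Y,F_rZ)$ yields the linear relation $\frac{\lambda^-}{2}a+\lambda^- b+c=0$. Cycling $(X,Y,Z)$ and exploiting total antisymmetry of $H$ produces two more relations forming a circulant system; its determinant vanishes only at $\lambda^-=-2/3$, in which case the kernel is spanned by $a=b=c$, and otherwise $a=b=c=0$. Either way, (iii) holds.

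Next I would prove (x): the $\gfrak$-part of \eqref{eq:curvaturem-XYxi}$=0$ reads $(\mu+1)[F(X,Y),r]=0$, so whenever $\mu\neq-1$, $F$ is central-valued and $R^\nabla=[F(\cdot,\cdot),\cdot]=0$. For (xi), I would combine the preceding expression for $(\nabla^-_XF)_rY$ with the identity $(\nabla^+_X-\nabla^-_X)F=H\cdot F$ (which follows from $\nabla^\pm=\nabla^g\pm\tfrac{1}{2}H$ acting on the $TM$-factor), obtaining $(\nabla^+_XF)_rY=(1-\lambda^-)H(X,F_rY)$. Pairing this with $Z$ and comparing to what \eqref{eq:curvaturep-XxiZ} yields for $g((\nabla^+_XF)_rY,Z)$ gives a second relation, namely $\lambda^+ a+(1-\lambda^-+\lambda^+)b+\lambda^+ c=0$. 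Feeding the constraint $a=b=c$ from the $\lambda^-=-2/3$ case of (iii) into this yields $(9\lambda^++5)a=0$, so $H(F_rX,Y,Z)=0$ unless $(\lambda^+,\lambda^-)=(-5/9,-2/3)$. Once $H(F_rX,Y)=0$, equation \eqref{eq:curvaturem-xiYZ} (or \eqref{eq:curvaturem-XYxi}) forces $\nabla^-F=0$, and the identity $\nabla^g-\nabla^-=\tfrac12 H$ together with total antisymmetry of $H$ then gives $\nabla^gF=0$.

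For (iv)--(ix) and (xii), I would imitate the Riemannian argument but keep the $F$-terms. From \eqref{eq:curvaturep-XYZ} and \eqref{eq:curvaturem-XYZ}$=0$, one obtains $R^\pm$ as the sum of a $\nabla^\pm H$-piece and an $F$-quadratic piece; skew-symmetrizing in $(X,Y)$ forces $\nabla^\pm H\in\Omega^4(M)$ whenever the coefficient $\lambda^\pm$ is nonzero, and Lemma~\ref{lem:bismut-vs-riemann-curvature}(i) together with $\Bi(H^2)\in\Omega^4(M)$ implies $\nabla^gH=\tfrac{1}{4}dH=\tfrac{1}{2}\Bi(F^2)$. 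Inserting everything back into the first Bianchi identity for $R^\pm$ produces equations of the form $(1+3\lambda^+)\Bi(H^2)=c\,\Bi(F^2)$ and $(1-3\lambda^-)\Bi(H^2)=c'\Bi(F^2)$; under condition~(a) one of these lets us solve for $\Bi(H^2)$ (and jointly for $\Bi(F^2)$, giving also (iv) $F^2=0$), yielding (xii) and then, via Lemma~\ref{lem:bismut-vs-riemann-curvature}(ii), items (vi)--(vii). Sectional and scalar curvature statements (viii)--(ix) follow formally from $R^g=\tfrac{1}{12}\Bi(H^2)-\tfrac14 H^2$ exactly as in the Riemannian case. Under condition~(b) ($\lambda^\pm=\mp1/3$, Levi-Civita) the coefficients on $\Bi(H^2)$ vanish, so I would instead argue directly that $R^\pm=\pm\tfrac{1}{6}\Bi(H^2)$ via $R^\pm=-\lambda^\pm\nabla^\pm H=\mp\tfrac{\lambda^\pm}{2}\Bi(H^2)$ after a separate Bianchi reduction that also establishes $\nabla^gH=0$; this mirrors the $\lambda^\pm=0$ subcase in the Riemannian proof.

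The main obstacle is the tight interlocking of $F$- and $H$-data in (iii) and (xi): whereas in the Riemannian case $F$ simply vanishes and decouples everything, here two curvature equations give \emph{different} formulas for $(\nabla^-F)$, and the consistency condition is a $3\times 3$ circulant system whose degeneracy locus is exactly $\lambda^-=-2/3$; mixing with the $\nabla^+$-equation then introduces the further exceptional value $\lambda^+=-5/9$. Tracking all these exceptional loci cleanly, and verifying that the $F$-quadratic contributions do not interfere with the Bianchi-based derivation of (xii), is what makes the statement and its proof notably longer than the Riemannian one.
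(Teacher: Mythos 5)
Your treatment of the unconditional items is sound: (i), (ii) and (x) are exactly the paper's argument, and your circulant system for (iii) is a clean repackaging of the paper's two-step argument (the paper first uses skew-symmetry in $X,Y$ of $F_r(H(X,Y))$ to get $H(F_rX,Y)=H(X,F_rY)$ and then deduces $H(F_rX,Y)=-F_r(H(X,Y))$; your determinant computation, degenerating exactly at $\lambda^-=-\tfrac{2}{3}$ with kernel $a=b=c$, reaches the same conclusion). Item (xi) also matches the paper, with the exceptional locus $(\lambda^+,\lambda^-)=(-\tfrac59,-\tfrac23)$ arising the same way.

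The gap is in items (iv)--(ix) and (xii). Your plan extracts all quantitative information from the first Bianchi identity for $R^\pm$, but that route only ever produces relations between $\Bi(H^2)$ and $\Bi(F^2)$; under condition (a) it yields at best $\Bi(F^2)=0$ and $\Bi(H^2)=0$, which is \emph{not} enough. Once $\nabla^\pm H=0$, the curvature identities reduce to $R^\pm=\epsilon_\pm F_{F(\cdot,\cdot)}$, so (vi) requires the full tensor $F^2$ to vanish, and $F^2$ is a priori only a section of $S^2\wedge^2T^*M$, a space on which $\Bi$ has a large kernel (any algebraic curvature tensor satisfying the first Bianchi identity). The missing ingredient is the pair-exchange symmetry $g(R^+(X,Y)Z,W)=g(R^-(Z,W)X,Y)+\tfrac12 dH(X,Y,Z,W)$ of Lemma \ref{lem:bismut-vs-riemann-curvature}\ref{lem:bismut-vs-riemann-curvature:RBismut-sym}: applied to the two expressions for $R^\pm$ it yields $3(1+\lambda^+-\lambda^-)F^2=(\lambda^++\lambda^-)\Bi(H^2)$, which forces $F^2\in\Omega^4(M)$, hence $\Bi(F^2)=3F^2$, and only then does $\Bi(F^2)=0$ upgrade to $F^2=0$. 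The same identity is what kills $F^2$ in the Levi-Civita case (b), where it reads $3F^2=0$ directly; your sketch skips this when writing $R^\pm=-\lambda^\pm\nabla^\pm H$ without the $\epsilon_\pm F^2$ term (and with a sign slip: one gets $R^\pm=\pm\tfrac{\lambda^\pm}{2}\Bi(H^2)=-\tfrac16\Bi(H^2)$). Finally, condition (a) also allows $\lambda^+=0$ and/or $\lambda^-=0$, where the skew-symmetrization argument does not give $\nabla^gH\in\Omega^4(M)$ for free; each of these subcases needs the separate treatment the paper provides, again leaning on the pair-exchange symmetry when $\lambda^\pm=0$.
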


\begin{proof}
First of all, \ref{prop:generalized-Riemann-flat:F-bracket} and \ref{prop:generalized-Riemann-flat:F-null} are proven exactly in the same manner as for the Riemannian case, see the proof of Proposition \ref{prop:generalized-Riemann-flat}. Notice that now \ref{prop:generalized-Riemann-flat:F-null} does not imply $F=0$, since $g$ is not necessarily Riemannian.

Secondly, from \eqref{eq:curvaturem-XYxi} we have that $(1+\mu)[F(X,Y),r]=0$. If $\mu\neq -1$, then $\nabla$ is flat, establishing \ref{prop:generalized-Riemann-flat:nabla-flat}.

Going further, \eqref{eq:curvaturem-XYxi} gives
\begin{equation} \label{eq:nablamF}
(\nabla^-_XF)_r Y=F_r (H(X,Y))-\lambda^- H(X, F_r Y),
\end{equation}
and \eqref{eq:curvaturem-xiYZ},
\[
(\nabla^-_XF)_r Y=\frac{\lambda^-}{2}H(F_r X,Y).
\]
from which we conclude that
\begin{equation} \label{eq:FxiHXY}
F_r (H(X,Y))=\frac{\lambda^-}{2}(H(F_r X,Y)+2H(X,F_r Y)).
\end{equation}
Since the left-hand side is skew-symmetric in $X$ and $Y$, the right-hand side must be so as well. If $\lambda^-=0$, then $F_r(H(X,Y))=0$, which would also mean that $H(F_rX,Y)=H(X,F_rY)=0$, proving \ref{prop:generalized-Riemann-flat:F-Horthogonal} and \ref{prop:generalized-Riemann-flat:HF-zero} in this case. If $\lambda^-\neq 0$, then
\[
H(F_r X,Y)+2H(X,F_r Y)=-H(F_r Y, X)-2H(Y,F_r X),
\]
and this is equivalent to $H(F_r X,Y)=H(X,F_r Y)$. This implies in turn that $H(F_r X,Y)=-F_r (H(X,Y))$, proving \ref{prop:generalized-Riemann-flat:F-Horthogonal} in general. We can, therefore, rewrite \eqref{eq:FxiHXY} as
\[
\left( 1+\frac{3}{2}\lambda^-\right) H(F_r X,Y)=0.
\]
If $\lambda^-\neq -\frac{2}{3}$, then $H(F_r X,Y)=0$. On the other hand, \eqref{eq:curvaturep-XxiZ} gives
\[
(\nabla^+_XF)_r Y=-\lambda^+(H(X,F_r Y)-F_r (H(X,Y))+H(F_r X, Y)),
\]
and by \ref{prop:generalized-Riemann-flat:F-Horthogonal} this equals $-3\lambda^+H(F_r X,Y)$. We also have in general that
\[
(\nabla^+_XF)_r Y=(\nabla^-_XF)_r Y +H(X,F_r Y)-F_r (H(X,Y)),
\]
and by \eqref{eq:nablamF} this equals $(1-\lambda^-)H(F_r X, Y)$.
Hence, we have that
\[
(1-\lambda^-+3\lambda^+)H(F_r X,Y)=0.
\]
If $\lambda^-=-\frac{2}{3}$, this equation becomes
\[
(5+9\lambda^+)H(F_r X,Y)=0.
\]
The conclusion of this discussion is that if $(\lambda^+,\lambda^-)\neq (-\frac{5}{9},-\frac{2}{3})$, then $H(F_r X,Y)=0$. In this case, we have that $\nabla^+F=\nabla^-F=0$ and, thus, also $\nabla^gF=0$, establishing finally \ref{prop:generalized-Riemann-flat:HF-zero}.

We now focus on \eqref{eq:curvaturep-XYZ} and \eqref{eq:curvaturem-XYZ}. We have that
\begin{equation}\label{eq:nablapm-Rpm}
\lambda^\pm\nabla^\pm_XH(Y,Z)=-R^\pm(X,Y)Z+\epsilon_\pm F_{F(X,Y)}Z,
\end{equation}
where $\epsilon_+=\frac{1}{2}$ and $\epsilon_-=-1$. We can rewrite this as
\begin{equation}\label{eq:nablapm-Rpm-noXYZ}
\lambda^\pm\nabla^\pm H=-R^\pm+\epsilon_\pm F^2.
\end{equation}
The first easy consequence of \eqref{eq:nablapm-Rpm} is that, since the right-hand side is skew-symmetric in $X$ and $Y$, then the left-hand side must be so as well. If $\lambda^+\neq 0$ or $\lambda^-\neq 0$, then this means that either $\nabla^+H$ or $\nabla^-H$ actually lies in $\Omega^4(M)$, and Lemma \ref{lem:bismut-vs-riemann-curvature}\ref{lem:bismut-vs-riemann-curvature:nablapmH} then implies that $\nabla^gH\in\Omega^4(M)$ as well, since $\Bi(H^2)\in\Omega^4(M)$. But then 
\begin{equation}\label{eq:nablagH}
\nabla^gH=\frac{1}{4}dH=\frac{1}{2}\Bi(F^2),    
\end{equation}
so we conclude from \eqref{eq:nablapm-Rpm-noXYZ} and Lemma \ref{lem:bismut-vs-riemann-curvature}\ref{lem:bismut-vs-riemann-curvature:nablapmH} that
\begin{align}
R^\pm &= -\lambda^\pm\nabla^\pm H+\epsilon_\pm F^2 \nonumber \\
&=-\lambda^\pm(\nabla^gH\mp\frac{1}{2}\Bi(H^2))+\epsilon_\pm F^2 \nonumber \\
&=\pm\frac{\lambda^\pm}{2}\Bi(H^2)-\frac{\lambda^\pm}{2} \Bi(F^2) +\epsilon_\pm F^2. \label{eq:Rpm}
\end{align}
These curvatures $R^\pm$ have to satisfy Lemma \ref{lem:bismut-vs-riemann-curvature}\ref{lem:bismut-vs-riemann-curvature:RBismut-sym}. Since $\Bi(H^2)$, $\Bi(F^2)$ and $F^2$ are sections of $S^2\wedge^2T^*M$, this gives, from \eqref{eq:Rpm},
\[
\lambda^+\Bi(H^2)-\lambda^+\Bi(F^2)+F^2=-\lambda^-\Bi(H^2)-\lambda^-\Bi(F^2)-2F^2+2\Bi(F^2).
\]
Thus,
\[
3F^2=-(\lambda^++\lambda^-)\Bi(H^2)+(\lambda^+-\lambda^-+2)\Bi(F^2).
\]
This implies that $F^2\in\Omega^4(M)$, so $\Bi(F^2)=3F^2$ and this equation becomes
\begin{equation} \label{eq:F2-BiH2}
3\left(1+\lambda^+-\lambda^-\right)F^2=(\lambda^++\lambda^-)\Bi(H^2).
\end{equation}
We will revisit this equation later.

The (first) Bianchi identity for $R^\pm$ in general states that
\begin{equation} \label{eq:bianchi-identity-Rpm}
\Bi(R^\pm)=\Bi(H^2)\pm\Bi(\nabla^\pm H).
\end{equation}
Using \eqref{eq:nablapm-Rpm-noXYZ} we now have that
\[
\lambda^\pm\Bi(R^\pm)=\lambda^\pm(\Bi(H^2)\pm\Bi(\nabla^\pm H))=\lambda^\pm\Bi(H^2)\mp\Bi(R^\pm)\pm\epsilon_\pm\Bi(F^2),
\]
i.e.,
\begin{equation} \label{eq:bianchi-Rpm}
(1\pm\lambda^\pm)\Bi(R^\pm)=\pm\lambda^\pm\Bi(H^2)+\epsilon_\pm\Bi(F^2).
\end{equation}
Since the left-hand side of \eqref{eq:nablapm-Rpm} is skew-symmetric on $Y$ and $Z$, then the right-hand side must be so as well, so
\[
R^\pm(X,Y)Z-\epsilon_\pm F_{F(X,Y)}Z=-R^\pm(X,Z)Y+\epsilon_\pm F_{F(X,Z)}Y.
\]
By skew-symmetry of the left-hand side of this equation on $X$ and $Y$, we also get
\[
R^\pm(X,Y)Z-\epsilon_\pm F_{F(X,Y)}Z=-R^\pm(Z,Y)X+\epsilon_\pm F_{F(Z,Y)}X.
\]
Hence,
\begin{align*}
\Bi(R^\pm)(X,Y,Z) &=R^\pm(X,Y)Z+R^\pm(Y,Z)X+R^\pm(Z,X)Y \\
&=3R^\pm(X,Y)Z+\epsilon_\pm(F_{F(Y,Z)}X+F_{F(Z,X)}Y-2F_{F(X,Y)}Z),
\end{align*}
which we can rewrite as
\[
\Bi(R^\pm)=3R^\pm+\epsilon_\pm(\Bi(F^2)-3F^2).
\]
Combined with \eqref{eq:bianchi-Rpm}, this gives that
\begin{align*}
3(1\pm\lambda^\pm)R^\pm+\epsilon_\pm(1\pm\lambda^\pm)(\Bi(F^2)-3F^2)=\pm\lambda^\pm\Bi(H^2)+\epsilon_\pm\Bi(F^2).
\end{align*}
Reordering the equation, we obtain
\begin{equation} \label{eq:intermmediate-1}
3(1\pm\lambda^\pm)R^\pm=\pm\lambda^\pm(\Bi(H^2)-\epsilon_\pm\Bi(F^2))+3\epsilon_\pm(1\pm\lambda^\pm)F^2.
\end{equation}

We can rewrite the left-hand side of this equation, using \eqref{eq:nablapm-Rpm-noXYZ} and Lemma \ref{lem:bismut-vs-riemann-curvature}\ref{lem:bismut-vs-riemann-curvature:nablapmH}, as
\begin{align}\label{eq:3(1+lambda)R}
3(1\pm\lambda^\pm)R^\pm &= -3(1\pm\lambda^\pm)\lambda^\pm\nabla^\pm H+3\epsilon_\pm(1\pm\lambda^\pm)F^2 \nonumber \\
&=-3(1\pm\lambda^\pm)\lambda^\pm\nabla^gH+3(1\pm\lambda^\pm)\left( \epsilon_\pm F^2\pm\frac{1}{2}\lambda^\pm\Bi(H^2)\right).
\end{align}
Using \eqref{eq:3(1+lambda)R} in \eqref{eq:intermmediate-1} and reordering the terms we are led to
\begin{equation} \label{eq:nablagH:app}
6\lambda^\pm (1\pm\lambda^\pm)\nabla^gH=\pm 2\lambda^\pm\epsilon_\pm\Bi(F^2)\pm\lambda^\pm(1\pm 3\lambda^\pm)\Bi(H^2).
\end{equation}
Using that $\nabla^gH=\frac{1}{2}\Bi(F^2)$ and reordering the terms we get
\begin{equation} \label{eq:BiH2-BiF2}
\lambda^\pm(3\mp2\epsilon_\pm\pm 3\lambda^\pm)\Bi(F^2)=\pm\lambda^\pm(1\pm 3\lambda^\pm)\Bi(H^2).
\end{equation}
Using that $F^2\in\Omega^4(M)$, these two equations separately look like
\begin{align}
3\lambda^+(2+3\lambda^+)F^2 &=\lambda^+(1+3\lambda^+)\Bi(H^2), \label{eq:F2-BiH2-2}\\
3\lambda^-(1-3\lambda^-)F^2 &=-\lambda^-(1-3\lambda^-)\Bi(H^2). \label{eq:F2-BiH2-3} 
\end{align}
Then $\lambda^-\eqref{eq:F2-BiH2-2}-\lambda^+\eqref{eq:F2-BiH2-3}$ gives
\[
3\lambda^+\lambda^-(1+3\lambda^++3\lambda^-)F^2=\lambda^+\lambda^-(2+3\lambda^+-3\lambda^-)\Bi(H^2).
\]
Multiplying this equation by $(\lambda^++\lambda^-)$ and using \eqref{eq:F2-BiH2} in the right-hand side, we obtain
\[
\lambda^+\lambda^-(1+3\lambda^++3\lambda^-)(\lambda^++\lambda^-)F^2=\lambda^+\lambda^-(2+3\lambda^+-3\lambda^-)(1+\lambda^+-\lambda^-)F^2.
\]
A patient calculation will show that this equation simplifies to
\[
\lambda^+\lambda^-(1+2\lambda^+)(1-3\lambda^-)F^2=0.
\]
Assume now that $\lambda^+\neq 0,-\frac{1}{2}$ and $\lambda^-\neq 0,\frac{1}{3}$. Then $F^2=0$, proving \ref{prop:generalized-Riemann-flat:F-isotropic} in this case. In particular, this gives that $\nabla^gH=\frac{1}{2}\Bi(F^2)=0$, proving \ref{prop:generalized-Riemann-flat:H-parallel}. It also gives, from \eqref{eq:Rpm},
\[
R^\pm=\pm\frac{\lambda^\pm}{2}\Bi(H^2).
\]

Moreover, \eqref{eq:BiH2-BiF2} now gives that $\lambda^-(1-3\lambda^-)\Bi(H^2)=0$, which implies that $\Bi(H^2)=0$, establishing \ref{prop:generalized-Riemann-flat:BiH2}. In this case, we have that $R^\pm=0=-\frac{1}{6}\Bi(H^2)$, settling \ref{prop:generalized-Riemann-flat:Rpm}. From Lemma \ref{lem:bismut-vs-riemann-curvature}\ref{lem:bismut-vs-riemann-curvature:Rpm-vs-Rg} it now follows that $R^g=\frac{1}{12}\Bi(H^2)-\frac{1}{4}H^2$, giving \ref{prop:generalized-Riemann-flat:Rg}.

Let us now study the cases where the previous argument breaks down, that is, when
\[
\lambda^+\lambda^-(1+2\lambda^+)(1-3\lambda^-)=0.
\]

First, if $\lambda^+\neq 0$ and $\lambda^-=0$, then \eqref{eq:F2-BiH2} and \eqref{eq:F2-BiH2-2} give
\begin{align}
3(1+\lambda^+)F^2 &=\lambda^+\Bi(H^2) \label{eq:F2-BiH2-lambda+}\\
(2+3\lambda^+)F^2 &=\left(\frac{1}{3}+\lambda^+\right)\Bi(H^2). \nonumber
\end{align}
Substracting these two gives
\[
F^2=-\frac{1}{3}\Bi(H^2).
\]
But then, \eqref{eq:F2-BiH2-lambda+} gives
\[
(1+2\lambda^+)F^2=0.
\]
Therefore, $F^2=0$ if $\lambda^+\neq -\frac{1}{2}$. In this case, then, items \ref{prop:generalized-Riemann-flat:F-isotropic}, \ref{prop:generalized-Riemann-flat:H-parallel}, \ref{prop:generalized-Riemann-flat:Rpm}, \ref{prop:generalized-Riemann-flat:Rg} and \ref{prop:generalized-Riemann-flat:BiH2} follow.

If now $\lambda^-\neq 0$ and $\lambda^+=0$, then \eqref{eq:F2-BiH2} and \eqref{eq:F2-BiH2-3} give
\begin{align}
3(1-\lambda^-)F^2 &=\lambda^-\Bi(H^2) \label{eq:F2-BiH2-lambda-}\\
(1-3\lambda^-)F^2 &=\left(-\frac{1}{3}+\lambda^-\right)\Bi(H^2). \nonumber
\end{align}
Substracting these two gives
\[
2F^2=\frac{1}{3}\Bi(H^2).
\]
But then, \eqref{eq:F2-BiH2-lambda-} gives
\[
(1-3\lambda^-)F^2=0.
\]
Therefore, $F^2=0$ if $\lambda^-\neq \frac{1}{3}$, and in this case also items \ref{prop:generalized-Riemann-flat:F-isotropic}, \ref{prop:generalized-Riemann-flat:H-parallel}, \ref{prop:generalized-Riemann-flat:Rpm}, \ref{prop:generalized-Riemann-flat:Rg} and \ref{prop:generalized-Riemann-flat:BiH2} follow.

Consider now the case $\lambda^\pm=0$. In this case, most of our arguments break down, since we do not even know whether $\nabla^gH$ is a 4-form. In this case, $R^\pm=\epsilon_\pm F^2$ directly holds, by \eqref{eq:nablapm-Rpm}. Lemma \ref{lem:bismut-vs-riemann-curvature}\ref{lem:bismut-vs-riemann-curvature:RBismut-sym} together with \eqref{eq:nablagH:app} implies now that
\[
\frac{1}{2}F^2=-F^2+\Bi(F^2),
\]
which means that $F^2=\frac{2}{3}\Bi(F^2)\in\Omega^4(M)$, in which case we have that $F^2=2F^2$, that is, $F^2=0$ again. Hence, $R^\pm=0$, and then the Bianchi identity \eqref{eq:bianchi-identity-Rpm} gives
\[
0=\Bi(H^2)\pm \Bi(\nabla^\pm H).
\]
Using Lemma \ref{lem:bismut-vs-riemann-curvature}\ref{lem:bismut-vs-riemann-curvature:nablapmH}, we obtain
\[
0=\Bi(H^2)\pm\Bi(\nabla^gH)-\frac{1}{2}\Bi(\Bi(H^2))=\pm\Bi(\nabla^gH)-\frac{1}{2}\Bi(H^2).
\]
This implies that $\Bi(\nabla^gH)=\Bi(H^2)=0$, proving \ref{prop:generalized-Riemann-flat:Rpm} and \ref{prop:generalized-Riemann-flat:BiH2} in this case. Finally, Lemma \ref{lem:bismut-vs-riemann-curvature}\ref{lem:bismut-vs-riemann-curvature:Rpm-vs-Rg} gives
\[
0=R^\pm=R^g\pm\Alt_{12}(\nabla^gH)+\frac{1}{4}(H^2-\Bi(H^2))=R^g\pm\Alt_{12}(\nabla^gH)+\frac{1}{4}H^2.
\]
This implies that $\Alt_{12}(\nabla^gH)=0$, proving \ref{prop:generalized-Riemann-flat:Rg}. Together with $\Bi(\nabla^gH)=0$, it also implies that
\begin{align*}
\nabla^g_XH(Y,Z)&=\nabla^g_XH(Y,Z) +2\Alt_{12}(\nabla^gH)(Y,Z,X)\\
&=\nabla^g_XH(Y,Z)+\nabla^g_YH(Z,X)-\nabla^g_ZH(Y,X) \\
&=\Bi(\nabla^gH)(X,Y,Z)=0,
\end{align*}
finally giving \ref{prop:generalized-Riemann-flat:H-parallel} in this case as well.

The conclusion of this discussion is that we have reduced the potentially problematic cases to those satisfying the equation
\[
(1+2\lambda^+)(1-3\lambda^-)=0,
\]
finally proving that the condition \ref{prop:generalized-Riemann-flat:degenerate-lambdas} suffices.

As for condition \ref{prop:generalized-Riemann-flat:levi-civita}, if $\lambda^-=\frac{1}{3}$, then \eqref{eq:F2-BiH2} gives
\[
3(2+3\lambda^+)F^2=(1+3\lambda^+)\Bi(H^2).
\]
If $\lambda^+=-\frac{1}{3}$, then we obtain $F^2=0$ again. Hence, $R^\pm=-\frac{1}{6}\Bi(H^2)$ by \eqref{eq:Rpm}. Then the Bianchi identity \eqref{eq:bianchi-identity-Rpm} and Lemma \ref{lem:bismut-vs-riemann-curvature}\ref{lem:bismut-vs-riemann-curvature:nablapmH} give
\[
-\frac{1}{2}\Bi(H^2)=\Bi(R^\pm)=\Bi(H^2)\pm\Bi(\nabla^\pm H)=\pm\Bi(\nabla^gH)-\frac{1}{2}\Bi(H^2).
\]
This implies that $\Bi(\nabla^gH)=0$. A similar argument as we did for $\lambda^\pm=0$, using Lemma \ref{lem:bismut-vs-riemann-curvature}\ref{lem:bismut-vs-riemann-curvature:RBismut-sym}, will show that $\Alt_{12}(\nabla^gH)=0$ and this will imply, together with $\Bi(\nabla^gH)=0$, that $\nabla^gH=0$. Altogether, this proves \ref{prop:generalized-Riemann-flat:F-isotropic}, \ref{prop:generalized-Riemann-flat:H-parallel}, \ref{prop:generalized-Riemann-flat:Rpm} and \ref{prop:generalized-Riemann-flat:Rg} for \ref{prop:generalized-Riemann-flat:levi-civita}. Notice that in this case we do not necessarily have $\Bi(H^2)=0$.

Lastly, the results on sectional and scalar curvature follow from the same argument as in Proposition \ref{prop:generalized-Riemann-flat}.
\end{proof}

By Proposition \ref{prop:generalized-Riemann-flat:app}, when $(1+2\lambda^+)(1-3\lambda^-)=0$, there could possibly be flat generalized Bismut connections not satisfying \ref{prop:generalized-Riemann-flat:F-isotropic}-\ref{prop:generalized-Riemann-flat:scal}. For $\lambda^+=-\frac{1}{2}$, it can be checked that such a connection would exist if and only if we can find $H$, $F$ and $g$ such that $F^2$ is a $4$-form and
\begin{align*}
\nabla^gH &= \frac{3}{2}F^2, &  \Bi(H^2)&=-\frac{1}{2}dH, &
R^g &= \frac{1}{4}(F^2-H^2).
\end{align*}
For $\lambda^-=\frac{1}{3}$ and $\lambda^+\neq -\frac{1}{3}$, such a connection would exist if and only if we can find $H$, $F$ and $g$ such that $F^2$ is a 4-form and.
\begin{align*}
\nabla^gH &= \frac{3}{2}F^2, & \Bi(H^2)&=\frac{2+3\lambda^+}{2(1+3\lambda^+)}dH, &
R^g &= \frac{1}{4}\left( 1+\frac{1}{1+3\lambda^+}\right) F^2-\frac{1}{4}H^2.
\end{align*}
This is an interesting question, since it leaves open the possibility of more distinguished points, or  generally subsets, in the Bismut family when the metric is pseudo-Riemannian.

\bigskip
\section{Computations} \label{app:computations}

We collect here the proofs of Lemma \ref{lem:brackets-in-regular-ca} and Proposition \ref{prop:curvatures-regular-CA}.

\begin{proof}[Proof of Lemma \ref{lem:brackets-in-regular-ca}]
Define the operator $\nabla^\pm_XY:=\rho([X^\mp,Y^\pm]_\pm)$. We will see that these are actually the Bismut connections of $(g,H)$. First of all, it is clear that they are connections on $TM$. Moreover, using that $\prodesc{a_\pm}{b_\pm}=\pm g(\rho a_\pm, \rho b_\pm)$ for $a_\pm,b_\pm\in V_\pm$, we see that they are metric connections:
\begin{align*}
\Lcal_X(g(Y,Z)) &=\pm \Lcal_{\rho X^\mp}\prodesc{Y^\pm}{Z^\pm}=\pm\prodesc{[X^\mp,Y^\pm]_\pm}{Z^\pm}\pm\prodesc{Y^\pm}{[X^\mp,Z^\pm]_\pm} \\
&=g(\nabla^\pm_XY,Z)+g(Y,\nabla^\pm_XZ).
\end{align*}
We can also compute their torsion:
\begin{align*}
g(T^{\nabla^\pm}(X,Y),Z) &= \pm\prodesc{[X^\mp,Y^\pm]_\pm-[Y^\mp,X^\pm]_\pm-[X,Y]^\pm}{Z^\pm} \\
&=\pm\prodesc{[X^\pm-X^\mp,Y^\mp-Y^\pm]}{Z^\pm} \\
&\qquad \pm\prodesc{[X^\mp,Y^\mp]+[X^\pm,Y^\pm]-[X,Y]^\pm}{Z^\pm}.
\end{align*}
Since $[\cdot,\cdot]$ vanishes on 1-forms, the first term is zero. For the second term we use \eqref{eq:bracket} to see that
\begin{align*}
g(T^{\nabla^\pm}(X,Y),Z) &=\pm\prodesc{[X,Y]\mp g[X,Y]+2i_Yi_XH+2F(X,Y)}{Z^\pm} \\
&=\pm 2\prodesc{i_Yi_XH}{Z^\pm}=\pm H(X,Y,Z).
\end{align*}
Therefore, $\nabla^\pm$ are indeed the Bismut connections of $(g,H)$.

From this we can conclude, using \eqref{eq:bracket} again, that
\[
[X^\mp,Y^\pm]_\pm=(\nabla^\pm_XY)^\pm+\frac{1\mp 1}{2}F(X,Y).
\]
Since $[X^\pm,Y^\mp]=-[Y^\mp,X^\pm]$, we finally have that
\begin{align*}
[X^\pm,Y^\mp] &= [X^\pm,Y^\mp]_\mp-[Y^\mp,X^\pm]_\pm \\
&= (\nabla^\mp_XY)^\mp+\frac{1\pm 1}{2}F(X,Y)-(\nabla^\pm_YX)^\pm -\frac{1\mp 1}{2}F(Y,X) \\
&=(\nabla^\mp_XY)^\mp-(\nabla^\pm_YX)^\pm+F(X,Y).
\end{align*}

On the other hand, we have that
\[
[X^\pm, Y^\pm]=[X\pm gX,Y\pm gY]=[X,Y]\pm (\Lcal_X(gY)-i_Yd(gX)\pm i_Yi_XH)+F(X,Y).
\]
Using the Koszul formula for the Levi-Civita connection of $g$ one can compute
\[
g^{-1}(\Lcal_X(gY)-i_Yd(gX)\pm i_Yi_XH)=2(\nabla^\pm X)^*Y+[X,Y]
\]
This gives, then, that
\begin{align*}
[X^\pm,Y^\pm]_\pm &=\frac{1}{2}\big( [X,Y]+2(\nabla^\pm X)^*Y+[X,Y] \pm g([X,Y]+2(\nabla^\pm X)^*Y+[X,Y]) \big) \\
&\qquad\qquad +\frac{1\mp 1}{2}F(X,Y) \\
&=((\nabla^\pm X)^*Y+[X,Y])^\pm+\frac{1\mp 1}{2}F(X,Y).
\end{align*}
and that
\begin{align*}
[X^\pm,Y^\pm]_\mp &=\frac{1}{2}\big( [X,Y]-2(\nabla^\pm X)^*Y-[X,Y] \mp g([X,Y]-2(\nabla^\pm X)^*Y-[X,Y] ) \big) \\
&\qquad\qquad +\frac{1\pm 1}{2} F(X,Y)\\
&=-((\nabla^\pm X)^*Y)^\mp+\frac{1\pm 1}{2} F(X,Y).
\end{align*}
From here we finally conclude that
\[
[X^\pm,Y^\pm]=((\nabla^\pm X)^*Y+[X,Y])^\pm-((\nabla^\pm X)^*Y)^\mp + F(X,Y).
\]

Lastly, \eqref{eq:bracket} gives
\[
[X^\pm,r]=[X\pm gX,r]=-\prodesc{i_XF}{r}+\nabla_Xr. \qedhere
\]
\end{proof}

\begin{proof}[Proof of Proposition \ref{prop:curvatures-regular-CA}]
Throughout the proof we will use the notation $\nablabarpm=\nabla^\pm+B^\pm$. 

Let us start with $\Rcal^+_D$. Using Lemma \ref{lem:brackets-in-regular-ca} and that $\nabla^\mp_XY-\nabla^\pm_YX=[X,Y]$,
\begin{align*}
\Rcal^+_D(X^+,Y^-)Z^+ &= D_{X^+}(\nabla^+_YZ)^+ - D_{Y^-}(\nablabarp_XZ)^+ - D_{[X^+,Y^-]}Z^+ \\
&=\left( \nablabarp_X\nabla^+_YZ-\nabla^+_Y\nablabarp_XZ-\nabla^+_{\nabla^-_XY}Z+\nablabarp_{\nabla^+_YX}Z -\frac{1}{2} F_{F(X,Y)}Z \right)^+  \\
&=\left( R^+(X,Y)Z-(\nabla^+_YB^+)_XZ -\frac{1}{2} F_{F(X,Y)}Z \right)^+.
\end{align*}

For the second one, notice that if $\alpha\in\Omega^1(M)$, then
\[
D_\alpha Z^+=\frac{1}{2}D_{\alpha^+-\alpha^-}Z^+=\frac{1}{2}\big( \nablabarp_{g^{-1}\alpha}Z-\nabla^+_{g^{-1}\alpha}Z \big)^+=\frac{1}{2}(B^+_{g^{-1}\alpha}Z)^+,
\]
so
\begin{align*}
\Rcal^+_D(X^+,r)Z^+ &=\frac{1}{2}D_{X^+}(F_r Z)^+-D_r(\nablabarp_XZ)^+-D_{[X^+,r]}Z^+ \\
&=\left( \frac{1}{2}\nablabarp_X(F_r Z) - \frac{1}{2}F_r (\nablabarp_XZ) +\frac{1}{2}B^+_{F_r X}Z-\frac{1}{2}F_{\nabla_Xr}Z \right)^+ \\
&=\frac{1}{2}\left( (\nabla^+_XF)_r Z+B^+_X(F_r Z)-F_r B^+_XZ+B^+_{F_r X}Z \right)^+.
\end{align*}

We now turn to $\Rcal^-_D$:
\begin{align*}
\Rcal^-_D(X^-,Y^+)Z^- &= D_{X^-}((\nabla^-_YZ)^-+F(Y,Z))-D_{Y^+}((\nablabarm_XZ)^-\\
&\qquad\qquad +A(X,Z))-D_{[X^-,Y^+]}Z^- \\
&=( \nablabarm_X\nabla^-_YZ)^-+A(X,\nabla^-_YZ)+(A_{F(Y,Z)}X)^-+\nabla_X(F(Y,Z)) \\
&\qquad\qquad +N_X(F(Y,Z))-(\nabla^-_Y\nablabarm_XZ)^--F(Y,\nablabarm_XZ)+\frac{1}{2}(F_{A(X,Z)}Y)^-\\
&\qquad\qquad -\nabla_Y(A(X,Z))-(\nabla^-_{\nabla^+_XY}Z)^--F(\nabla^+_XY,Z)+(\nablabarm_{\nabla^-_YX}Z)^- \\
&\qquad\qquad +A(\nabla^-_YX,Z)-(W_{F(X,Y)}Z)^--C_Z(F(X,Y)) \\
&= \left(R^-(X,Y)Z-(\nabla^-_YB^-)_XZ + A_{F(Y,Z)}X \right. \\
&\qquad\qquad \left.-W_{F(X,Y)}Z+\frac{1}{2}F_{A(X,Z)}Y \right)^- \\
&\qquad\qquad -(\nabla^-_YA)(X,Z)+(\nabla^-_XF)(Y,Z)-F(Y,B^-_XZ)\\
&\qquad\qquad -F(H(X,Y),Z)-C_Z(F(X,Y)).
\end{align*}
\begin{align*}
\Rcal^-_D(X^-,Y^+)r &=D_{X^-}\left( -\frac{1}{2}(F_r Y)^-+\nabla_Yr) \right)-D_{Y^+}((A_r X)^-+\nabla_Xr+N_Xr)\\
&\qquad\qquad -D_{[X^-,Y^+]}r \\
&=-\frac{1}{2}(\nablabarm_X(F_r Y))^--\frac{1}{2}A(X,F_r Y)+(A_{\nabla_Yr}X)^-+\nabla_X\nabla_Yr\\
&\qquad\qquad +N_X(\nabla_Yr)-(\nabla^-_Y(A_r X))^--F(Y,A_r X)+\frac{1}{2}(F_{\nabla_Xr+N_Xr}Y))^-\\
&\qquad\qquad -\nabla_Y(\nabla_Xr+N_Xr)+\frac{1}{2}(F_r(\nabla^+_XY))^--\nabla_{\nabla^+_XY}r+(A_r(\nabla^-_YX))^-\\
&\qquad\qquad +\nabla_{\nabla^-_XY}r+N_{\nabla^-_YX}r-C(F(X,Y),r)^--L_{F(X,Y)}r \\
&=\left(-\frac{1}{2}(\nabla^-_XF)_r Y-(\nabla^-_YA)_r X-\frac{1}{2}B^-_XF_r Y \right. \\
&\qquad\qquad \left. +\frac{1}{2}F_r(H(X,Y))-C(F(X,Y),r) +\frac{1}{2}F_{N_Xr}Y\right)^- \\
&\qquad\qquad +R^\nabla(X,Y)r -(\nabla^-_YN)_Xr-\frac{1}{2}A(X,F_r Y) \\
&\qquad\qquad -F(Y,A_r X)+L_{F(X,Y)}r.
\end{align*}
For the next one, notice that if $\alpha\in\Omega^1(M)$, then
\begin{align*}
D_\alpha Z^-&=\frac{1}{2}D_{\alpha^+-\alpha^-}Z^-\\
&=\frac{1}{2}\left( \big( \nabla^-_{g^{-1}\alpha}Z-\nablabarm_{g^{-1}\alpha}Z\big)^-+F(g^{-1}\alpha,Z)-A(g^{-1}\alpha,Z) \right)\\
&=\frac{1}{2}( -(B^-_{g^{-1}\alpha}Z)^-+F(g^{-1}\alpha,Z)-A(g^{-1}\alpha,Z)),
\end{align*}
so
\begin{align*}
\Rcal^-_D(r,Y^+)Z^- &=D_r ((\nabla^-_YZ)^-+F(Y,Z))-D_{Y^+}((W_r Z)^-+C_Zr)-D_{[r,Y^+]}Z^- \\
&=(W_r(\nabla^-_YZ))^-+C_{\nabla^-_YZ}r+C(r,F(Y,Z))^-+L_r(F(Y,Z)) \\
&\qquad\qquad -(\nabla^-_Y(W_r Z))^--F(Y,W_r Z)+\frac{1}{2}(F_{C_Zr}Y)^--\nabla_Y(C_Zr) \\
&\qquad\qquad +\frac{1}{2}(B^-_{F_r Y}Z)^--\frac{1}{2}F(F_r Y,Z)+\frac{1}{2}A(F_r Y,Z) \\
&\qquad\qquad +(W_{\nabla_Yr}Z)^-+C_Z(\nabla_Yr) \\
&=\left(-(\nabla^-_YW)_r Z+C(r,F(Y,Z))+\frac{1}{2}F_{C_Zr}Y-\frac{1}{2}B^-_{F_r Y}Z
\right)^- \\
&\qquad\qquad -(\nabla^-_YC)_Zr+L_r(F(Y,Z))-F(Y,W_r Z)\\
&\qquad\qquad +\frac{1}{2}A(F_r Y, Z)-\frac{1}{2}F(F_r Y,Z).
\end{align*}

Lastly, if $\alpha\in\Omega^1(M)$, then
\[
D_\alpha r=\frac{1}{2}D_{\alpha^+-\alpha^-}r=-\frac{1}{2}\left(\frac{1}{2}F_r(g^{-1}\alpha)+A_r(g^{-1}\alpha) \right)^--N_{g^{-1}\alpha}r,
\]
so
\begin{align*}
\Rcal^-_D(r,Y^+)s &=D_r\left(-\frac{1}{2}(F_s Y)^-+\nabla_Ys\right)-D_{Y^+}(C(r,s)^-+L_rs) -D_{[r,Y^+]}s \\
&=-\frac{1}{2}(W_r F_s Y)^--\frac{1}{2}C_{F_s Y}r+C(r,\nabla_Ys)^-+L_r(\nabla_Ys) \\
&\qquad -(\nabla^-_Y(C(r,s)))^--F(Y,C(r,s))+\frac{1}{2}(F_{L_rs}Y)^--\nabla_Y(L_rs) \\
&\qquad +\frac{1}{2}\left( \frac{1}{2}F_s F_r Y+A_s F_r Y \right)^-+N_{F_r Y}s+C(\nabla_Yr,s)^-+L_{\nabla_Yr}s \\
&=\left( -\nabla^-_YC(r,s)-\frac{1}{2}W_r F_s Y+\frac{1}{2}F_{L_rs}Y+\frac{1}{4}F_s F_r Y +\frac{1}{2}A_s F_r Y\right)^- \\
&\qquad -(\nabla_YL)_rs -\frac{1}{2}C_{F_s Y}r-F(Y,C(r,s))+N_{F_r Y}s. \qedhere
\end{align*}
\end{proof}

\bibliographystyle{alpha}
\bibliography{Flat-biblio}

\end{document}